 \newtheorem{thm}{Theorem}[section]
 \newtheorem{lem}[thm]{Lemma}
 \newtheorem{prop}[thm]{Proposition}
  \theoremstyle{definition}
  \newtheorem{defn-thm}[thm]{Definition-Theorem}
 \theoremstyle{remark}
 \newtheorem{rem}[thm]{Remark}
\numberwithin{equation}{section}
\numberwithin{thm}{section}
\numberwithin{table}{section}
\numberwithin{figure}{section}
\newcommand{\ep}{\varepsilon}
\newcommand{\R}{\mathbb{R}}
\begin{document}

\title{Uniqueness and symmetry of ground states for higher-order equations}
\author{Woocheol Choi}
\address{Department of Mathematics Education, Incheon National University, Incheon 22012, Korea}
\email{choiwc@inu.ac.kr}

\author{Younghun Hong}
\address{Department of Mathematics, Yonsei University, Seoul 03722, Korea}
\email{younghun.hong@yonsei.ac.kr}

\author{Jinmyoung Seok}
\address{Department of Mathematics, Kyonggi University, Suwon 16227, Korea}
\email{jmseok@kgu.ac.kr}

\thanks{}

\maketitle

\setcounter{tocdepth}{-1}

\pagestyle{plain}

\begin{abstract}
We establish uniqueness and radial symmetry of ground states for higher-order nonlinear Schr\"odinger and Hartree equations whose higher-order differentials have small coefficients. As an application, we obtain error estimates for higher-order approximations to the pseudo-relativistic ground state. Our proof adapts the strategy of Lenzmann \cite{L2} using local uniqueness near the limit of ground states in a variational problem. However, in order to bypass difficulties from lack of symmetrization tools for higher-order differential operators, we employ the contraction mapping argument in our earlier work \cite{CHS2} to construct radially symmetric real-valued solutions, as well as improving local uniqueness near the limit. 
\end{abstract}

\section{Introduction}

Higher-order elliptic equations, whose higher-order differentials have small coefficients, arise in various physical contexts. For instance, in nonlinear optics, the envelope dynamics of wave trains in a weakly nonlinear medium is given by the equation 
$$i\epsilon^2\partial_t\psi=\omega(\epsilon\partial) \psi-\epsilon^2|\psi|^2\psi,$$
where $\ep >0$, $\psi=\psi(t,x):\mathbb{R}\times\mathbb{R}^d\to\mathbb{C}$ and $\omega(\partial)$ denotes the Fourier multiplier operator with a symbol $a=a(\xi):\mathbb{R}^d\to\mathbb{R}$. Looking for a stationary solution, inserting the ansatz $\psi(t,x)=e^{i\mu t}u(x)$ with $\mu>0$, we obtain the time-independent equation
$$\omega(\epsilon\partial)u+\epsilon^2\mu u=\epsilon^2|u|^2u.$$
When high-frequency dispersion is negligible and the medium is isotropic\footnote{with $\omega(0)=\nabla_{\xi_j} \omega(0)=0$ and $\partial_{\xi_j}\partial_{\xi_k}\omega(0)=\delta_{jk}$ by a suitable change of variables.}, the above equation can be approximated by the second-order equation
$$-\Delta u+\mu u=|u|^2u$$
(see \cite{SS}). However, if high frequency dispersion is weak but not negligible,  one should consider a higher-order equation whose differential operator is a Taylor polynomial of $\omega(\epsilon\partial)$. Here, higher-order terms have small coefficients. 

In astrophysics, the mean-field limit of stationary boson stars is described by the pseudo-relativistic nonlinear Hartree equation
\begin{equation}\label{pNLH}
\left(\sqrt{-c^2\Delta+m^2c^4}-mc^2\right)u+\mu u=\left(|x|^{-1}*|u|^2\right)u,
\end{equation}
where $u=u(x):\mathbb{R}^3\to\mathbb{C}$, $m>0$ is the particle mass and $c>0$ stands for the speed of light. In applications, taking the formal Taylor polynomial of the pseudo-relativistic operator
\begin{equation}\label{pseudo-relativistic operator}
\left(\sqrt{-c^2\Delta+m^2c^4}-mc^2\right)=mc^2\left(\sqrt{1-\frac{\Delta}{m^2c^2}}-1\right)=\frac{1}{2m}(-\Delta)-\frac{1}{8m^3c^2}(-\Delta)^2+\cdots,
\end{equation}
the higher-order model
\begin{equation}\label{hpNLH}
\left(\sum_{j=1}^J\frac{(-1)^{j-1}\alpha_j}{m^{2j-1}c^{2j-2}}(-\Delta)^j\right)u+\mu u=\left(|x|^{-1}*|u|^2\right)u,
\end{equation}
where $\alpha_j=\frac{(2j-2)!}{j!(j-1)!2^{2j-1}}$, is employed to avoid possible complication from having a non-local operator (see \cite{CM, CLM} and the references therein).

Moreover, given a previously known second-order model 
$$-\Delta u+\mu u=f(|u|^2)u,$$
a higher-order equation is sometimes introduced as a refinement taking additional physical effects in account. In this case, it is natural to put small coefficients on higher-order differentials, like
$$-\Delta u+\epsilon\Delta^2u+\mu u=f(|u|^2)u,$$
for consistency with the second-order model.\\

The purpose of this paper is to provide a general strategy to prove uniqueness and radial symmetry of ground states for a certain class of higher-order elliptic equations including the above examples.

Before proceeding, it should be pointed out that proving uniqueness and symmetry of ground states for higher-order equations is in general quite challenging. That is because some of useful tools, such as the diamagnetic inequality, the P\'olya-Szeg\"o inequality, the moving plane method and the shooting game argument, might not be available. Recall that for second-order equations, the standard variational approach employs the diamagnetic inequality $\|\nabla(|u|)\|_{L^2}\leq \|\nabla u\|_{L^2}$ in the first step in order to obtain a non-negative ground state from a hypothetical possibly sign-changing ground state, and then symmetrization tools are applied to prove symmetry and uniqueness. When the symbol of a pseudo-differential operator is a Bernstein function, e.g., the pseudo-relativistic operator \eqref{pseudo-relativistic operator}, the diamagnetic inequality as well as symmetrization tools can be recovered by a beautiful argument in \cite{LY, Daubechies} involving the Bernstein's theorem. However, this method does not work for higher-order operators.

In fact, some of analytic tools have been developed for higher-order operators e.g., for polyharmonic operators, and there might be a way to apply them for uniqueness and symmetry. For a comprehensive overview, we refer to the book by Gazzola, Grunau and Sweers \cite{GGS}. Nevertheless, they cannot be directly applied to the above examples. Even worse, the desired diamagnetic inequality does not seem to hold for higher-order differential operators, because even if $u$ is smooth, its second derivative $\nabla_{x_j}^2(|u|)$ could be very singular near the set $\{x: u(x)=0\}$. In this paper, we go around the lack of the analytic tools rather than making an effort to build them up, by taking the advantage of higher-order differentials having small coefficients.\\

From now on, for concreteness of exposition, we restrict ourselves to the higher-order nonlinear Schr\"odinger equation (NLS)
\begin{equation}\label{hNLS}
P_\epsilon u+ u=|u|^{2k}u,
\end{equation}
where $k\in\mathbb{N}$ and $u=u(x):\mathbb{R}^d\to\mathbb{C}$, and the three-dimensional higher-order nonlinear Hartree equation (NLH)
\begin{equation}\label{hNLH}
P_\epsilon u+u=\left(|x|^{-1}*|u|^2\right)u,
\end{equation}
where $u=u(x):\mathbb{R}^3\to\mathbb{C}$. For an even integer $J$ and $\epsilon\geq0$ (including zero), the higher-order differential operator $P_\epsilon$ is defined by
$$P_\epsilon=P_\epsilon^J=:-\Delta+\sum_{|\alpha|=3}^Jc_\alpha\epsilon^{|\alpha|-2} (i\nabla)^\alpha,$$
where $\alpha=(\alpha_1,\cdots,\alpha_d)\in(\mathbb{Z}_{\geq 0})^d$ denotes a multi-index and $(i\nabla)^\alpha=i^{|\alpha|}\nabla_{x_1}^{\alpha_1}\cdots\nabla_{x_d}^{\alpha_d}$. We assume that the family of operators $\{P_{\epsilon}\}_{0\leq\epsilon\leq 1}$ is uniformly elliptic in the sense that there exists $\gamma>0$, independent of $\epsilon\geq0$, such that
\begin{equation}\label{uniform lower bound}
1+P_\epsilon\geq \gamma(1-\Delta).
\end{equation}
For NLS \eqref{hNLS}, we further assume that $1\leq d\leq 3$ and
\begin{equation}\label{subcritical}
\left\{\begin{aligned}
&k\in\mathbb{N}&&\textup{if }d=1,2,\\
&k=1&&\textup{if }d=3
\end{aligned}\right.
\end{equation}
so that the odd-power nonlinearity is $H^1$-subcritical. We remark that as $\epsilon\to 0$, the higher-order NLS \eqref{hNLS} formally converges to the standard second-order NLS
\begin{equation}\label{reference NLS}
-\Delta u+ u=|u|^{2k}u,
\end{equation}
while the higher-order NLH \eqref{hNLH} converges to the second-order NLH
\begin{equation}\label{reference NLH}
-\Delta u+ u=\left(|x|^{-1}*|u|^2\right)u.
\end{equation}

A solution to the higher-order NLS \eqref{hNLS} (resp., the higher-order NLH \eqref{hNLH}) is called a \textit{ground state} if it is a minimizer for the action functional
$$
I_\epsilon(u):=\left\{\begin{aligned}
&\frac{1}{2}\int_{\mathbb{R}^d}(P_\epsilon+1) u\bar{u}\,dx-\frac{1}{2k+2}\int_{\mathbb{R}^d}|u|^{2k+2}dx\quad\textup{(for NLS \eqref{hNLS})}\\
&\frac{1}{2}\int_{\mathbb{R}^3}(P_\epsilon+1) u\bar{u}\,dx-\frac{1}{4}\int_{\mathbb{R}^3}\left(|x|^{-1}*|u|^2\right)|u|^2dx\quad\textup{(for NLH \eqref{hNLH})}.
\end{aligned}\right.$$
restricted to the constraint
$$\langle I_\epsilon'(u),u\rangle_{L^2}=0\quad\textup{and}\quad u\neq0,$$
where $I_\epsilon'$ is the Frech\'et derivative of $I_\epsilon$. When $\epsilon=0$, it is known that the second-order NLS \eqref{reference NLS} (resp., the second-order NLH \eqref{reference NLH}) has a smooth radially symmetric positive ground state, denoted by $Q_0$, and it is unique up to translation and phase shift\footnote{We say that $Q$ is a unique solution up to translation and phase shift if for any solution $u$, there exist $x_0\in\mathbb{R}^d$ and $\theta\in\mathbb{R}$ such that $u(x)=e^{i\theta}Q(x-x_0)$.}. Moreover, the ground state $Q_0$ is non-degenerate. Indeed, linearizing the equation near the ground state $Q_0$, we obtain the linearized operator $\mathcal{L}=\begin{psmallmatrix}\mathcal{L}_0^+ &0\\0&\mathcal{L}_0^-\end{psmallmatrix}$ with the identification $a+bi\leftrightarrow \begin{psmallmatrix}a\\b\end{psmallmatrix}$, where the linear operators $\mathcal{L}_0^\pm: H^2(\mathbb{R}^d;\mathbb{R})\to L^2(\mathbb{R}^d;\mathbb{R})$ are defined by
$$\mathcal{L}_0^+h:=\left\{\begin{aligned}
&-\Delta h+h-(2k+1)Q_0^{2k}h&&\quad\textup{(for NLS \eqref{reference NLS})}\\
&-\Delta h+h-2\left(|x|^{-1}*(Q_0h)\right)Q_0-\left(|x|^{-1}*Q_0^2\right)h&&\quad\textup{(for NLH \eqref{reference NLH})}
\end{aligned}\right.$$
and 
$$\mathcal{L}_0^-h:=\left\{\begin{aligned}
&-\Delta h+h-Q_0^{2k}h&&\quad\textup{(for NLS \eqref{reference NLS})}\\
&-\Delta h+h-\left(|x|^{-1}*Q_0^2\right)h&&\quad\textup{(for NLH \eqref{reference NLH})},
\end{aligned}\right.$$
By \textit{non-degeneracy}, we mean that the kernels of $\mathcal{L}_0^\pm$ are explicitly given by
$$\left\{\begin{aligned}
\textup{Ker}\mathcal{L}_0^+ &={\textup{span}}\left\{\partial_{x_1}Q_0,\cdots, \partial_{x_d}Q_0\right\},\\
\textup{Ker}\mathcal{L}_0^- &=\textup{span}\left\{Q_0\right\}.
\end{aligned}\right.$$
When $\epsilon>0$, by standard variational arguments, one can show that the higher-order NLS \eqref{hNLS} (resp., the higher-order NLH \eqref{hNLH}) possesses a ground state $Q_\epsilon^J$ and that it converges to $Q_0$ as $\epsilon\to 0$ (see Proposition \ref{existence}). \\

Our main theorem establishes uniqueness and radial symmetry of ground states for the higher-order equations \eqref{hNLS} and $\eqref{hNLH}$.
\begin{thm}[Uniqueness and symmetry]\label{main thm}
Suppose that \eqref{uniform lower bound} (as well as \eqref{subcritical} for NLS) holds. Then, there exists $\epsilon_0>0$ such that for each $0<\epsilon\leq\epsilon_0$, there exists a smooth radially symmetric and real-valued ground state $Q_\epsilon^J$ for the higher-order NLS \eqref{hNLS} (resp., the higher-order NLH \eqref{hNLH}), and it is unique up to translation and phase shift. Moreover, the ground state $Q_\epsilon^J$ is non-degenerate in the sense of Proposition \ref{Non-degeneracy epsilon} below.
\end{thm}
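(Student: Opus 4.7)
The plan is to adapt Lenzmann's strategy \cite{L2} by reducing global uniqueness of ground states to a local uniqueness statement near the limiting ground state $Q_0$. The argument breaks into three steps: (i) construct a distinguished branch $Q_\epsilon^J$ of smooth radial real-valued solutions bifurcating from $Q_0$; (ii) show that any ground state $u_\epsilon$, after translation and phase adjustment, is close to $Q_0$ in a strong norm when $\epsilon$ is small; (iii) use non-degeneracy of the linearization to upgrade this proximity to equality $u_\epsilon = Q_\epsilon^J$.

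For step (i), I would work in $H^s_{\textup{rad}}(\mathbb{R}^d;\mathbb{R})$ with $s$ large enough to control the $J$-th order operator, and write the equation as $F_\epsilon(u):=(P_\epsilon+1)u-N(u)=0$ with $N$ denoting the odd-power or Hartree nonlinearity. The linearization $DF_0(Q_0)=\mathcal{L}_0^+$ has kernel spanned by the non-radial modes $\partial_{x_j}Q_0$, so it is invertible on the radial real-valued subspace. Expanding $F_\epsilon(Q_0+h)=0$ and inverting $\mathcal{L}_0^+$ on this subspace converts the problem to a fixed-point equation $h=\mathcal{T}_\epsilon(h)$ whose right-hand side is an $O(\epsilon)$ perturbation, using \eqref{uniform lower bound} to bound $(P_\epsilon-(-\Delta))Q_0$. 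The contraction mapping argument from \cite{CHS2} then yields a unique small solution $Q_\epsilon^J=Q_0+O(\epsilon)$ in $H^s$, smooth by bootstrapping, which qualifies as a ground state because its action value converges to the limiting ground state energy.

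For step (ii), Proposition \ref{existence} already gives $H^1$ convergence of (some) ground states to $Q_0$ up to translation and phase. To promote this to $H^s$, rewrite the equation as $(1-\Delta)u_\epsilon = N(u_\epsilon) - (P_\epsilon-(-\Delta))u_\epsilon$ and iterate: \eqref{uniform lower bound} allows one to absorb the higher-order perturbation into the left-hand side when estimating in $H^s$. For step (iii), after normalizing translation and phase so that $v_\epsilon:=u_\epsilon-Q_\epsilon^J$ is $L^2$-orthogonal to $\{\partial_{x_j}Q_\epsilon^J\}_{j=1}^d\cup\{iQ_\epsilon^J\}$ (possible by an implicit function theorem argument for the symmetry group action, since $v_\epsilon$ is small), the nonlinear equation for $v_\epsilon$ reads $\mathcal{L}_\epsilon v_\epsilon = O(\|v_\epsilon\|_{H^s}^2)$, where $\mathcal{L}_\epsilon$ is the linearization at $Q_\epsilon^J$. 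Since $\mathcal{L}_\epsilon\to\mathcal{L}$ in operator norm and $\mathcal{L}$ is non-degenerate, $\mathcal{L}_\epsilon$ remains invertible on the orthogonal complement of its kernel uniformly in $\epsilon$ (this is the content of the non-degeneracy statement of Proposition \ref{Non-degeneracy epsilon}), yielding $\|v_\epsilon\|_{H^s}\leq C\|v_\epsilon\|_{H^s}^2$ and hence $v_\epsilon=0$.

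The main obstacle is step (ii). Because neither the P\'olya-Szeg\"o nor the diamagnetic inequality is available for $P_\epsilon$, ground states are not a priori radial and minimizing sequences could in principle concentrate, vanish, or split. One must run a concentration-compactness argument adapted to the higher-order setting, using the uniform ellipticity \eqref{uniform lower bound} to secure compactness of minimizing sequences modulo translations and then exploiting the strict subadditivity of the variational levels (which follows from the fact that $Q_0$ is a non-degenerate minimizer of the limit problem) to rule out dichotomy. A secondary technical point is making the non-degeneracy of $\mathcal{L}_\epsilon$ quantitative, i.e., obtaining a spectral gap uniform in $\epsilon$; this follows from a perturbative argument based on non-degeneracy of $\mathcal{L}$ and the strong convergence $Q_\epsilon^J\to Q_0$ established in step (i).
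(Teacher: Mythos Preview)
Your proposal follows essentially the same three-step strategy as the paper: construct a radial real-valued solution near $Q_0$ by contraction (Proposition~\ref{contraction}), show that variational ground states converge to $Q_0$ (Propositions~\ref{existence}--\ref{convergence}), and identify the two via a local uniqueness statement driven by non-degeneracy (Proposition~\ref{uniqueness}). The technical ingredients you list---uniform ellipticity to control $(P_\epsilon-(-\Delta))Q_0$, bootstrapping for higher regularity, modulation to impose the orthogonality conditions, and perturbative stability of the spectral gap---all match the paper's.

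There is one logical slip worth flagging. In step~(i) you assert that the contraction-constructed solution ``qualifies as a ground state because its action value converges to the limiting ground state energy.'' This does not follow: knowing $I_\epsilon(Q_\epsilon^J)\to\mathcal{C}_0$ and $\mathcal{C}_\epsilon\to\mathcal{C}_0$ does not give $I_\epsilon(Q_\epsilon^J)=\mathcal{C}_\epsilon$. The paper (and implicitly your own steps~(ii)--(iii)) closes this in the opposite direction: one first establishes existence of a variational ground state and its $H^1_{P_\epsilon}$-convergence to $Q_0$ via a Nehari-manifold/Brezis--Lieb argument, and then local uniqueness forces this ground state to coincide with the radial contraction solution. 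Radial symmetry of the ground state is therefore an \emph{output} of the identification, not something established beforehand. Relatedly, your discussion of the ``main obstacle'' conflates two separate issues: the concentration-compactness argument for existence is standard and requires neither symmetrization nor strict subadditivity; the absence of P\'olya--Szeg\"o and diamagnetic inequalities is an obstacle to proving \emph{symmetry} directly, which is precisely why the indirect identification argument is needed.
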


For the proof, we follow the roadmap in the important work by Lenzmann \cite{L2}, where uniqueness of ground states for the pseudo-relativistic NLH \eqref{pNLH} is established. The robust approach of Lenzmann \cite{L2} can be summarized in two steps.
\begin{description}[align=left]
\item[Step 1] Construct a ground state for the pseudo-relativistic NLH \eqref{pNLH}, and prove its convergence to the ground state $Q_0$ for the second-order NLH \eqref{reference NLH} as $c\to \infty$ up to translation and phase shift. Here, by construction (involving variational techniques), a pseudo-relativistic ground state must be positive and radially symmetric.
\item[Step 2] Prove uniqueness of a radially symmetric real-valued solution to the pseudo-relativistic NLH near the ground state $Q_0$. The proof of this local uniqueness relies on the non-degeneracy of the ground state $Q_0$, which is indeed one of the main contributions of Lenzmann \cite{L2}.
\item[Conclusion] If $c\geq 1$ is large enough, then a pseudo-relativistic ground state is close to $Q_0$, so it is unique up to translation and phase shift.
\end{description}

As for higher-order equations, however, we cannot make use of radial symmetry of a ground state for the proof of uniqueness, but we have to prove it instead, since we do not have symmetrization tools at hand. In order to overcome these obstacles, we employ several new ingredients, including the contraction mapping argument  in our earlier work \cite{CHS2} and the improved local uniqueness near the ground state $Q_0$. Our proof can be summarized as follows.
\begin{description}[align=left]
\item[Step 1] Construct a ground state $Q_\epsilon^J$ for the higher-order equation \eqref{hNLS} (resp., \eqref{hNLH}), and prove its convergence to the ground state $Q_0$ for the second-order equation \eqref{reference NLS} (resp., \eqref{reference NLH}) as $\epsilon\to0$ up to translation and phase shift. We remark that contrary to Step 1 in \cite{L2}, due to lack of symmetrization tools, it is not known that $Q_\epsilon^J$ is radially symmetric and real-valued.
\item[Step 2] Construct a radially symmetric real-valued solution $u_\epsilon$ for the higher-order equation converging to the ground state $Q_0$ by the contraction mapping argument. By construction, the solution $u_\epsilon$ does not have any variational character as a ground state.
\item[Step 3] Prove uniqueness (up to translation and phase shift) for the higher-order equation near the ground state $Q_0$ without assuming that solutions are radially symmetric or real-valued. 
\item[Conclusion] If $\epsilon>0$ is small enough, then two solutions $Q_\epsilon^J$ and $u_\epsilon$ are close to the ground state $Q_0$. Thus, identifying them by uniqueness, we conclude that $Q_c^J$ is a unique radially symmetric real-valued ground state.
\end{description}
For the proof of local uniqueness in Step 2, we assume that there is a solution $\tilde{u}_\epsilon$, and then modify it by translation and phase shift to be perpendicular to the kernel of the linearized operator around $u_\epsilon$. Then, we prove that the modified $\tilde{u}_\epsilon$ is indeed $u_\epsilon$ itself. This argument, choosing the best modulation parameters, seems quite natural in the context of orbital stability \cite{W}. However, to the best of authors' knowledge, such a local uniqueness and its proof seem new. \\

Next, we consider the pseudo-relativistic NLH \eqref{pNLH} and the higher-order NLH \eqref{hpNLH} in the non-relativistic regime ($c\gg 1$). In this case, it is shown in Lenzmann \cite{L2} that the pseudo-relativistic NLH \eqref{pNLH} has a radially symmetric positive ground state $Q_c$, which is unique up to translation and phase shift. On the other hand, by the main theorem of this paper, the higher-order NLH \eqref{hpNLH} also has a radially symmetric and real-valued ground state $Q_c^J$, and it is unique up to translation and phase shift. Then, the contraction mapping argument in \cite{CHS2} can be applied to compare two ground states. As a consequence, we obtain the following error estimates for the higher-order approximations to the pseudo-relativistic ground state.

\begin{thm}[Higher-order approximations to a pseudo-relativistic ground state]\label{approximation}
Let $J$ be an odd number, and $c>0$ be a sufficiently large number. We denote by $Q_c$ (resp., $Q_c^J$) the unique radially symmetric, real-valued ground state for the pseudo-relativistic NLH \eqref{pNLH} (resp., the higher-order NLH \eqref{hpNLH}). Then,
$$\|Q_c^J-Q_c\|_{H^1}\lesssim\frac{1}{c^{2J}}.$$ 
\end{thm}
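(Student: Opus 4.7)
The plan is to treat $Q_c$ as an approximate solution of the higher-order equation \eqref{hpNLH} and apply a contraction mapping argument in the spirit of our earlier paper \cite{CHS2} to produce a nearby genuine solution; uniqueness will then force that solution to equal $Q_c^J$. Throughout, write $T_c := \sqrt{-c^2\Delta+m^2c^4}-mc^2$ for the pseudo-relativistic operator and $T_c^J := \sum_{j=1}^J \frac{(-1)^{j-1}\alpha_j}{m^{2j-1}c^{2j-2}}(-\Delta)^j$ for its truncated Taylor polynomial, and use that both $Q_c$ and $Q_c^J$ converge (in $H^1$, possibly after a standard rescaling) to the second-order ground state $Q_0$ as $c\to\infty$.

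The first step is to quantify the residual $(T_c^J-T_c)Q_c$. Since $T_c^J$ is, up to the factor $mc^2$, the $J$-th Taylor polynomial of $f(y)=\sqrt{1+y}-1$ evaluated at $y=|\xi|^2/(m^2c^2)$, the standard Taylor remainder for $y\leq 1$ together with the elementary bound $|f(y)-P_J(y)|\lesssim y^J$ for $y\geq 1$ shows that the Fourier-multiplier symbol $\sigma_c$ of $T_c-T_c^J$ obeys
\begin{equation*}
|\sigma_c(\xi)|\lesssim c^{-2J}|\xi|^{2J+2}\text{ for }|\xi|\leq mc,\qquad |\sigma_c(\xi)|\lesssim c^{2-2J}|\xi|^{2J}\text{ for }|\xi|>mc.
\end{equation*}
Combined with the uniform Sobolev bounds $\|Q_c\|_{H^{2J+2}}\lesssim 1$, which follow from a bootstrap on the pseudo-relativistic equation together with $Q_c\to Q_0$, this yields the residual estimate $\|(T_c^J-T_c)Q_c\|_{L^2}\lesssim c^{-2J}$.

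Next, I would set $v=Q_c+h$ with $h$ radial and real-valued, in which case \eqref{hpNLH} for $v$ becomes
\begin{equation*}
\mathcal{L}_c h = -(T_c^J-T_c)Q_c - (T_c^J-T_c)h + \mathcal{N}(Q_c,h),
\end{equation*}
where $\mathcal{L}_c h := T_c h+\mu h - 2(|x|^{-1}*(Q_c h))Q_c-(|x|^{-1}*Q_c^2)h$ is the pseudo-relativistic linearization at $Q_c$ and $\mathcal{N}(Q_c,h)$ collects the two quadratic Hartree terms. Lenzmann's non-degeneracy result \cite{L2} gives invertibility of $\mathcal{L}_c$ on the radial real-valued subspace of $H^1$; together with a perturbation argument around the $c\to\infty$ limit, this should give a uniform-in-$c$ bound on $\|\mathcal{L}_c^{-1}\|_{L^2\to H^1}$. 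A standard contraction mapping on the ball $\{\|h\|_{H^1}\leq Kc^{-2J}\}$ for $K$ sufficiently large then produces a radial real-valued $h$ with $\|h\|_{H^1}\lesssim c^{-2J}$. The resulting $v=Q_c+h$ is a radial real-valued solution of \eqref{hpNLH} lying within $O(c^{-2J})$ of $Q_c$, hence of $Q_0$, in $H^1$; so by the local-uniqueness portion of the proof of Theorem \ref{main thm}, $v$ coincides with $Q_c^J$ after fixing the sign, giving $\|Q_c^J-Q_c\|_{H^1}=\|h\|_{H^1}\lesssim c^{-2J}$.

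I expect the main technical obstacle to be extracting the correct uniform-in-$c$ control of $\|\mathcal{L}_c^{-1}\|_{L^2\to H^1}$ on radial real-valued functions. Non-degeneracy at each fixed large $c$ is the central content of \cite{L2}, but the contraction argument needs the inverse to stay bounded as $c\to\infty$; this should follow from a sufficiently strong convergence of $\mathcal{L}_c$ to a non-degenerate limit operator on the radial subspace, but deserves careful justification. Once that uniform bound is in hand, the remaining estimates --- Taylor remainders, Hardy--Littlewood--Sobolev bounds for $\mathcal{N}$, and a standard fixed-point argument --- are routine.
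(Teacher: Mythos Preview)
Your overall plan---compare $Q_c$ and $Q_c^J$ through a residual estimate, linearize, invert---is close to the paper's, but the choice of linearized operator creates a genuine gap. By writing
\[
\mathcal{L}_c h = -(T_c^J-T_c)Q_c - (T_c^J-T_c)h + \mathcal{N}(Q_c,h)
\]
with the \emph{pseudo-relativistic} linearization $\mathcal{L}_c=T_c+\mu-\mathcal{N}_{Q_c}^+$, you have placed the term $(T_c^J-T_c)h$ on the right-hand side of the fixed-point map. This term cannot be absorbed in an $H^1$ contraction: $T_c^J-T_c$ is a differential operator of order $2J$, and from your own symbol bounds one sees that $\|(T_c^J-T_c)\|_{H^1\to L^2}$ is not small (in fact not even finite) uniformly in $c$. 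No amount of uniform control on $\|\mathcal{L}_c^{-1}\|_{L^2\to H^1}$ will close the loop, because $\mathcal{L}_c^{-1}$ gains at most one derivative while the bad term loses $2J$.

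The paper avoids this by linearizing with the \emph{higher-order} operator, i.e.\ with $\mathcal{L}_c^{J,+}:=T_c^J+\mu-\mathcal{N}_{Q_c}^+$, so that the difference $(T_c-T_c^J)$ acts only on the smooth profile $Q_c$, never on the unknown. The required uniform invertibility of $\mathcal{L}_c^{J,+}$ on radial real-valued functions is exactly the content of Lemma~\ref{radial lower bound} (with $u_\epsilon$ replaced by $Q_c$), and your anticipated ``main technical obstacle'' disappears. Moreover, since by this stage both $Q_c$ and $Q_c^J$ are already known to exist and to converge to $Q_0$, there is no need for a contraction mapping followed by a uniqueness identification: one writes the equation for $r_c^J=Q_c^J-Q_c$ directly, inverts $\mathcal{L}_c^{J,+}$, and absorbs the quadratic remainder using the a priori smallness of $r_c^J$. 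This yields $\|r_c^J\|_{H^1}\lesssim \|(T_c-T_c^J)Q_c\|_{H^{-1}}\lesssim c^{-2J}\|Q_c\|_{H^{2J+1}}$ in one step.
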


\begin{rem}
The higher-order Schr\"odinger operator in \eqref{hpNLH} is introduced as a higher-order approximation to the pseudo-relativistic operator $\sqrt{-c^2\Delta+m^2c^4}-mc^2$, provided that high-frequencies are not dominant. In \cite{CM, CLM}, the error estimates for the higher-order approximation to the linear evolution has been discussed. Theorem \ref{approximation} first provides a precise error estimate for the higher-order approximation to the pseudo-relativistic ground state, which the simplest nonlinear object.
\end{rem}

\subsection{Notations}
We denote the potential energy functional by
$$
\mathcal{N}(u):=\left\{\begin{aligned}
&\frac{1}{2k+2}\int_{\mathbb{R}^d}|u|^{2k+2}dx&&\textup{(for NLS)}\\
&\frac{1}{4}\int_{\mathbb{R}^3}\left(|x|^{-1}*|u|^2\right)|u|^2dx&&\textup{(for NLH)}.
\end{aligned}\right.$$
Then, the nonlinearity of the equation is given as its Frech\'et derivative 
$$\mathcal{N}'(u):=\left\{\begin{aligned}
&|u|^{2k}u&&\textup{(for NLS)}\\
&\left(|x|^{-1}*|u|^2\right)u&&\textup{(for NLH)}.
\end{aligned}\right.$$
We denote by $H_{P_\epsilon}^1=H_{P_\epsilon}^1(\mathbb{R}^3;\mathbb{C})$ the Hilbert space equipped with the inner product
$$\langle f,g\rangle_{H_{P_\epsilon}^1}:=\int_{\mathbb{R}^3} \left(P_\epsilon+1\right)f(x)\overline{g(x)}dx,$$

\subsection{Organization of the paper}
In Section 2, we prove existence of ground states $Q_\epsilon^J$'s for the higher-order NLS (resp., the higher-order NLH) and their convergence to the ground state $Q_0$ for the second-order equation. In Section 3, we provide the non-degeneracy estimates, which are the key analytic tools in this paper. Using them, in Section 4, we construct radially symmetric real-valued solutions $u_\epsilon$'s converging to the ground state $Q_0$ for the second-order equation. In Section 5, we establish local uniqueness for higher-order equations near the ground state $Q_0$, and then identifying $Q_\epsilon^J$ and $u_\epsilon$, we prove the main theorem (Theorem \ref{main thm}). Finally, in Section 6, we prove the error estimates for the higher-order approximation to the pseudo-relativistic ground state (Theorem \ref{approximation}).

\subsection{Acknowledgement} 
This research of the second author was supported by Basic Science Research Program through the National Research Foundation of Korea(NRF) funded by the Ministry of Education (NRF-2017R1C1B1008215). This research of the third author was supported by Basic Science Research Program through the National Research Foundation of Korea(NRF) funded by the Ministry of Education (NRF-2017R1D1A1A09000768).

\section{Construction of ground states, and their limit}

By the standard variational method, we construct ground states for higher-order equations (Proposition \ref{existence}), and show their convergence to the ground state for the second-order equation (Proposition \ref{convergence}). In addition, we prove that in general, convergence to the ground state for the second-order equation in a low regularity norm can be upgraded to that in high regularity norms (Proposition \ref{upgraded convergence}).

\begin{prop}[Existence of a ground state]\label{existence}
Suppose that \eqref{uniform lower bound} (as well as \eqref{subcritical} for NLS) holds. Then, for any $\epsilon>0$, the higher-order equation \eqref{hNLS} (resp., \eqref{hNLH}) possesses a ground state $Q_\epsilon^J\in H_{P_\epsilon}^1$.
\end{prop}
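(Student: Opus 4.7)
The plan is to apply the direct method on the Nehari manifold. Set
$$\mathcal{M}_\epsilon := \{u \in H_{P_\epsilon}^1 \setminus \{0\} : \langle I_\epsilon'(u), u\rangle_{L^2} = 0\}, \qquad m_\epsilon := \inf_{u \in \mathcal{M}_\epsilon} I_\epsilon(u).$$
The Nehari constraint reads $\|u\|_{H_{P_\epsilon}^1}^2 = 2p\,\mathcal{N}(u)$ with $p = k+1$ for NLS and $p = 2$ for NLH, so on $\mathcal{M}_\epsilon$ the action reduces to $I_\epsilon(u) = \tfrac{p-1}{2p}\|u\|_{H_{P_\epsilon}^1}^2$. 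By \eqref{uniform lower bound}, $\|\cdot\|_{H_{P_\epsilon}^1}$ is equivalent to $\|\cdot\|_{H^1}$ with constants depending on $\epsilon$, and by the subcritical Sobolev embedding (for NLS) or the Hardy--Littlewood--Sobolev inequality (for NLH) one has $\mathcal{N}(u) \lesssim \|u\|_{H^1}^{2p}$; hence for every $u \not\equiv 0$ there is a unique $\lambda(u) > 0$ with $\lambda(u) u \in \mathcal{M}_\epsilon$, so $\mathcal{M}_\epsilon \neq \emptyset$ and $m_\epsilon \in (0, \infty)$.

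Next, I would take a minimizing sequence $\{u_n\} \subset \mathcal{M}_\epsilon$. The reduction makes $\{u_n\}$ bounded in $H^1$, and the Nehari identity together with the subcritical bound forces $\|u_n\|_{H^1} \geq \delta > 0$. If $\sup_{y \in \mathbb{R}^d} \int_{B(y,1)} |u_n|^2\,dx \to 0$, then Lions' vanishing lemma would give $u_n \to 0$ in $L^{2k+2}$ for NLS and in $L^{12/5}$ for NLH (via HLS), contradicting $\mathcal{N}(u_n) \geq c > 0$. Translating $u_n \mapsto u_n(\cdot - y_n)$ and passing to a subsequence therefore yields a weak $H^1$-limit $Q_\epsilon^J \not\equiv 0$ that is still the limit of a minimizing sequence by translation invariance.

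To identify $Q_\epsilon^J$ as a minimizer I would combine weak lower semicontinuity of the nonnegative quadratic form $\|\cdot\|_{H_{P_\epsilon}^1}^2$ with the weak continuity of $\mathcal{N}$ on our translated bounded sequence (Rellich for NLS; standard compactness for the Hartree functional in NLH). Rescaling $Q_\epsilon^J$ onto $\mathcal{M}_\epsilon$ via $\lambda \leq 1$ and comparing $I_\epsilon(\lambda Q_\epsilon^J) \geq m_\epsilon$ with $I_\epsilon(Q_\epsilon^J) \leq m_\epsilon$ forces $\lambda = 1$, so $Q_\epsilon^J$ minimizes $I_\epsilon$ over $\mathcal{M}_\epsilon$. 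A Lagrange-multiplier argument, using that the derivative of the Nehari functional along dilations is strictly negative at every point of $\mathcal{M}_\epsilon$, then promotes this constrained critical point to a free critical point of $I_\epsilon$, i.e.\ a solution of the higher-order NLS (resp., NLH) belonging to $H_{P_\epsilon}^1$.

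The principal obstacle is the absence of a symmetrization tool for $P_\epsilon$, which blocks the usual reduction to radially decreasing minimizers and the corresponding compact embedding. The remedy is that $I_\epsilon$ is still translation invariant, so Lions' concentration-compactness with translations alone suffices: the higher-order part of $P_\epsilon$ contributes only a nonnegative weakly lower semicontinuous quadratic form on top of the Laplacian, and the standard arguments for the second-order problem extend to this setting without essential modification.
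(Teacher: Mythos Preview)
Your overall plan coincides with the paper's: minimize on the Nehari manifold, use Lions' lemma to exclude vanishing, translate, and pass to a nonzero weak limit. The gap is in identifying that weak limit as a minimizer.

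You assert ``weak continuity of $\mathcal{N}$ on our translated bounded sequence (Rellich for NLS; standard compactness for the Hartree functional in NLH)''. This step is not justified. A single translation rules out \emph{vanishing} but not \emph{dichotomy}: the translated sequence $v_n$ may split as $v_0+w_n$ with $w_n\rightharpoonup 0$ yet $\mathcal{N}(w_n)\not\to 0$ (a second bump escaping to infinity). Rellich gives only $L^2_{\mathrm{loc}}$ compactness, hence $L^{2k+2}_{\mathrm{loc}}$ by interpolation, not global $L^{2k+2}$ convergence; the Hartree functional is likewise not weakly sequentially continuous on bounded subsets of $H^1(\mathbb{R}^3)$. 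Without $\mathcal{N}(v_n)\to\mathcal{N}(v_0)$ you can establish neither $I_\epsilon(Q_\epsilon^J)\le m_\epsilon$ nor the inequality $\lambda\le 1$, so the comparison that is supposed to force $\lambda=1$ collapses.

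The paper closes exactly this gap without any weak-continuity claim. It applies the Brezis--Lieb lemma to obtain $\|v_n\|_{H^1_{P_\epsilon}}^2=\|v_0\|^2+\|w_n\|^2+o(1)$ and $\mathcal{N}(v_n)=\mathcal{N}(v_0)+\mathcal{N}(w_n)+o(1)$, whence $\langle I_\epsilon'(v_0),v_0\rangle+\langle I_\epsilon'(w_n),w_n\rangle=o(1)$. If $\langle I_\epsilon'(v_0),v_0\rangle=\delta\neq 0$, whichever of $v_0,\,w_n$ carries the negative Nehari defect can be rescaled by some $t\le 1-\eta$ onto the constraint, yielding action $\le(1-\eta)^2\mathcal{C}_\epsilon<\mathcal{C}_\epsilon$, a contradiction. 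Hence $v_0$ already lies on the Nehari manifold, and weak lower semicontinuity of the quadratic form alone finishes. Your outline becomes correct once this Brezis--Lieb splitting (or an equivalent dichotomy analysis) replaces the unjustified Rellich step.
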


Throughout this section, we denote the order of nonlinearity by
\begin{align*}
p=\left\{\begin{aligned}
&2k+1&&\textup{for NLS }\eqref{hNLS}\\
&3&&\textup{for NLH }\eqref{hNLH}.
\end{aligned}\right.
\end{align*}
We observe that by algebra,
$$\langle\mathcal{N}'(u),u\rangle_{L^2}=(p+1)\mathcal{N}(u).$$
Hence, if $u$ is admissible for the variational problem \eqref{ground state energy level}, equivalently 
\begin{equation}\label{constraint}
0=\langle I_\epsilon'(u),u\rangle_{L^2}=\|u\|_{H_{P_\epsilon}^1}^2-(p+1)\mathcal{N}(u)\quad\left(\Leftrightarrow\|u\|_{H_{P_\epsilon}^1}^2=(p+1)\mathcal{N}(u)\right),
\end{equation}
then the action functional can be written as
\begin{equation}\label{action functional under constraint}
I_\epsilon(u)=\frac{1}{2}\|u\|_{H_{P_\epsilon}^1}^2-\mathcal{N}(u)=\frac{p-1}{2(p+1)}\|u\|_{H_{P_\epsilon}^1}^2=\frac{p-1}{2}\mathcal{N}(u).
\end{equation}

For each $\epsilon\geq0$ (including $\epsilon=0$), the \textit{ground state energy level} is defined by
\begin{equation}\label{ground state energy level}
\mathcal{C}_\epsilon := \inf\Big\{ I_\epsilon(u) ~|~ u \in H^1_{P_\epsilon} \setminus \{0\}\textup{ and }\langle I_\epsilon'(u),u \rangle_{L^2} = 0\Big\}.
\end{equation}
The following lemma is useful to prove the proposition.

\begin{lem}\label{energy level lemma}
Suppose that \eqref{uniform lower bound} (as well as \eqref{subcritical} for NLS) holds. Then, $\mathcal{C}_\epsilon$ is strictly positive, and 
\begin{equation}\label{energy level upper bound}
\limsup_{\epsilon\to0}\mathcal{C}_\epsilon\leq\mathcal{C}_0.
\end{equation}
\end{lem}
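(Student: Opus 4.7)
The plan is to prove the two assertions separately, both based on the reformulation $I_\epsilon(u) = \frac{p-1}{2(p+1)}\|u\|_{H^1_{P_\epsilon}}^2$ from \eqref{action functional under constraint}, which is valid on the admissible set.

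For strict positivity of $\mathcal{C}_\epsilon$, I would show that on the admissible set the quantity $\|u\|_{H^1_{P_\epsilon}}$ is bounded away from zero. The uniform ellipticity hypothesis \eqref{uniform lower bound} yields $\|u\|_{H^1}^2 \leq \gamma^{-1}\|u\|_{H^1_{P_\epsilon}}^2$. In the NLS case, the subcriticality assumption \eqref{subcritical} and Sobolev embedding give $\mathcal{N}(u) \lesssim \|u\|_{L^{2k+2}}^{2k+2} \lesssim \|u\|_{H^1}^{p+1}$; in the NLH case, the Hardy--Littlewood--Sobolev and Sobolev inequalities jointly give $\mathcal{N}(u) \lesssim \|u\|_{L^{12/5}}^4 \lesssim \|u\|_{H^1}^4$, which again reads $\mathcal{N}(u) \lesssim \|u\|_{H^1}^{p+1}$ since $p+1=4$. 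Inserting these bounds into the constraint \eqref{constraint} gives
$$\|u\|_{H^1_{P_\epsilon}}^2 = (p+1)\mathcal{N}(u) \lesssim \gamma^{-(p+1)/2}\|u\|_{H^1_{P_\epsilon}}^{p+1},$$
so $\|u\|_{H^1_{P_\epsilon}}^{p-1} \gtrsim \gamma^{(p+1)/2}$ (using $u \neq 0$), and by \eqref{action functional under constraint} we conclude that $I_\epsilon(u)$ is bounded below by a positive constant, hence $\mathcal{C}_\epsilon > 0$.

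For the upper bound \eqref{energy level upper bound}, I would use the second-order ground state $Q_0$ as a trial function after a scalar rescaling that restores admissibility for the $\epsilon$-problem. Setting $u_\epsilon := \lambda_\epsilon Q_0$, the constraint \eqref{constraint} forces
$$\lambda_\epsilon^{p-1} = \frac{\|Q_0\|_{H^1_{P_\epsilon}}^2}{(p+1)\mathcal{N}(Q_0)}.$$
Because $Q_0$ is smooth and decays exponentially, it belongs to every $H^m(\mathbb{R}^d)$; expanding the definition of $P_\epsilon$ and using that each higher-order term carries a factor $\epsilon^{|\alpha|-2}$ with $|\alpha|\geq 3$, we obtain $\|Q_0\|_{H^1_{P_\epsilon}}^2 = \|Q_0\|_{H^1}^2 + O(\epsilon)$. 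Combined with the $\epsilon=0$ admissibility $\|Q_0\|_{H^1}^2 = (p+1)\mathcal{N}(Q_0)$, this forces $\lambda_\epsilon \to 1$ as $\epsilon \to 0$. Then, again by \eqref{action functional under constraint},
$$\mathcal{C}_\epsilon \leq I_\epsilon(u_\epsilon) = \frac{p-1}{2(p+1)}\lambda_\epsilon^2 \|Q_0\|_{H^1_{P_\epsilon}}^2 \;\longrightarrow\; \frac{p-1}{2(p+1)}\|Q_0\|_{H^1}^2 = \mathcal{C}_0 \quad\text{as }\epsilon\to 0.$$

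No serious obstacle is expected; the only external input is smoothness and exponential decay of $Q_0$, both of which are classical for the ground state of the second-order equation and no sharp constants are needed anywhere in the argument.
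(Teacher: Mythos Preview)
Your proposal is correct and follows essentially the same route as the paper: for positivity you combine the constraint \eqref{constraint} with Sobolev (and Hardy--Littlewood--Sobolev for NLH) and the uniform ellipticity \eqref{uniform lower bound} to get a uniform lower bound on $\|u\|_{H^1_{P_\epsilon}}$, and for \eqref{energy level upper bound} you rescale $Q_0$ to an admissible competitor $\lambda_\epsilon Q_0$ with $\lambda_\epsilon\to 1$, exactly as the paper does with its $t_\epsilon$. The only cosmetic differences are that you solve explicitly for $\lambda_\epsilon$ and evaluate $I_\epsilon$ via the $H^1_{P_\epsilon}$-norm form of \eqref{action functional under constraint}, whereas the paper uses the $\mathcal{N}$-form; also, you invoke exponential decay of $Q_0$ where mere membership in every $H^\ell$ (which is all the paper uses) already suffices.
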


\begin{proof}
For NLS \eqref{hNLS}, by the Sobolev inequality with \eqref{uniform lower bound}, we have
\begin{equation}\label{nonlinear bound 1}
\mathcal{N}(u)=\frac{1}{2k+2}\|u\|_{L^{2k+2}}^{2k+2}\leq C' \|u\|_{H_{P_\epsilon}^1}^{2k+2},
\end{equation}
while for NLH \eqref{hNLH}, by the Hardy-Littlewood-Sobolev inequality and the Sobolev inequality with \eqref{uniform lower bound},
\begin{equation}\label{nonlinear bound 2}
\mathcal{N}(u)\leq \frac{1}{4}\left\|\frac{1}{|x|}*|u|^2\right\|_{L^6}\left\||u|^2\right\|_{L^{6/5}}\leq C \left\||u|^2\right\|_{L^{6/5}}^2=C \|u\|_{L^{12/5}}^4\leq C' \|u\|_{H_{P_\epsilon}^1}^4.
\end{equation}
Then, inserting the above inequality to the constraint \eqref{constraint}, we get
$$0\geq \|u\|_{H_{P_\epsilon}^1}^2(1-(p+1)C' \|u\|_{H_{P_\epsilon}^1}^{p-2})\quad\left(\Leftrightarrow\|u\|_{H_{P_\epsilon}^1}^2\geq ((p+1)C')^{-\frac{2}{p-2}} \right)$$
Thus, by \eqref{action functional under constraint}, $I(u_\epsilon)=\frac{p-1}{2(p+1)}\|u\|_{H_{P_\epsilon}^1}^2\geq\frac{p-1}{2(p+1)}((p+1)C')^{-\frac{2}{p-2}}$. Taking the infimum, we prove the lower bound on $\mathcal{C}_\epsilon$.

To show \eqref{energy level upper bound}, we observe that the ground state $Q_0$ for the limit equation \eqref{reference NLS} (resp., \eqref{reference NLH}) is almost admissible for the variational problem \eqref{ground state energy level} for sufficiently small $\epsilon>0$, because
\begin{align*}
\langle I_\epsilon'(Q_0),Q_0\rangle_{L^2}&=\langle (P_\epsilon+1)Q_0-\mathcal{N}'(Q_0), Q_0\rangle_{L^2}\\
&=\langle (-\Delta+1)Q_0-\mathcal{N}'(Q_0), Q_0\rangle_{L^2}+\langle (P_\epsilon-(-\Delta))Q_0, Q_0\rangle_{L^2}\\
&=0+o_\epsilon(1)=o_\epsilon(1),
\end{align*}
where in the third identity, we used that $Q_0\in H^\ell$ for all $\ell\in\mathbb{N}$. Hence, for each $\epsilon>0$, there exists $t_\epsilon=1+o_\epsilon(1)$ such that $t_\epsilon Q_0$ is admissible. Then, it follows from the definition of the level set $\mathcal{C}_\epsilon$ and \eqref{action functional under constraint} that
$$\mathcal{C}_\epsilon\leq I_\epsilon(t_\epsilon Q_0)=\frac{p-1}{2}\mathcal{N}(t_\epsilon Q_0)=t_\epsilon^{p+1}\cdot\frac{p-1}{2}\mathcal{N}(Q_0)=(1+o_\epsilon(1))\cdot I_0(Q_0).$$
Thus, taking $\limsup_{\epsilon\to 0}$, we prove \eqref{energy level upper bound}.
\end{proof}

\begin{proof}[Proof of Proposition \ref{existence}]
Let $\{u_n\}_{n=1}^\infty\subset H^1_{P_\epsilon}$ be a minimizing sequence for $I_\ep(u)$ subject to the constraint $\langle I_\epsilon'(u),u \rangle_{L^2} = 0$ with $u \neq 0$, which is, by \eqref{action functional under constraint}, bounded in $H_{P_\epsilon}^1$. We consider the Levy concentration function of $u_n$ (see \cite{Li})
\[
M_n(r) := \sup_{x\in \R^d}\int_{B_r(x)}|u_n|^2\,dx,
\]
where $B_r(x)$ denotes the Euclidean ball of the radius $r$ centered at $x$.

Suppose that there exists some $r > 0$ such that $M_n(r) \to 0$ as $n \to \infty$. It is shown in \cite{Li} that $\{u_n\}_{n=1}^\infty$ converges to zero in $L^p(\mathbb{R}^d)$ for every $2 < p < 2^*$, where $\frac{1}{2^*}=\max\{\frac{d-2}{2d},0\}$. Thus, by \eqref{action functional under constraint} and \eqref{nonlinear bound 1} (resp., \eqref{nonlinear bound 2}), it follows that $I_\epsilon(u_n)=\frac{p-1}{2}\mathcal{N}(u_n)\to 0$ as $n \to \infty$, but this contradicts to that $\mathcal{C}_\epsilon>0$ (see Lemma \ref{energy level lemma}).

Now, passing to a subsequence, we assume that
$$M_0 := \lim_{n\to\infty} M(1) > 0.$$ 
Then, there exists a  sequence $\{x_n\}_{n=1}^\infty \subset \R^d$ such that for sufficiently large $n$,
\begin{equation}\label{nontriviality}
\int_{B_1(x_n)}|u_n|^2\,dx > \frac{M_0}{2}.
\end{equation}
Translating the sequence, we introduce another minimizing sequence $\{v_n\}_{n=1}^\infty$ given by $v_n(x) = u_n(x+x_n)$, which is bounded in $H^1_{P_\epsilon}$. Let $v_0$ be the weak subsequential limit of $\{v_n\}_{n=1}^\infty$ in $H^1_{P_\epsilon}$ as $n\to\infty$. Note that $v_0\neq0$, since $\{v_n\}_{n=1}^\infty$ is locally compact in $L^2(\mathbb{R}^d)$ and it satisfies \eqref{nontriviality}.

We claim that $v_0$ is admissible for the minimization problem \eqref{ground state energy level}, i.e., 
\[
\langle I_\epsilon'(v_0), v_0 \rangle_{L^2} = 0.
\]
In order to prove the claim by contradiction, we assume that
$$\delta := \langle I_\epsilon'(v_0), v_0 \rangle_{L^2} > 0,$$
and then applying the well-known Brezis-Lieb lemma, we decompose $v_n = v_0 +w_n$ such that
\begin{align}
\|v_n\|_{H^1_{P_\epsilon}}^2 &= \|v_0\|_{H^1_{P_\epsilon}}^2+\|w_n\|_{H^1_{P_\epsilon}}^2 +o_n(1),\label{BL lemma}\\
\mathcal{N}(v_n) &= \mathcal{N}(v_0) +\mathcal{N}(w_n)+o_n(1),\nonumber
\end{align}
and consequently, 
$$\langle I_\epsilon'(w_n), w_n \rangle_{L^2} =\langle I_\epsilon'(v_n), v_n \rangle_{L^2} -\langle I_\epsilon'(v_0), v_0 \rangle_{L^2} +o_n(1)= -\delta +o_n(1).$$
We observe that $f_n(t) := \langle I_\epsilon'(tw_n), tw_n \rangle_{L^2}$ is a polynomial of the form $a_nt^2 -b_nt^{p+1}$ with $a_n, b_n > 0$, and that $f_n(1)\leq-\frac{\delta}{2}$ for large $n$. Hence, there exist a small $\eta \in (0, 1)$ and a sequence $\{t_n\}_{n=1}^\infty$, with $0< t_n \leq 1-\eta$, such that $\{t_nw_n\}_{n=1}^\infty$ is admissible, i.e., $\langle I_\epsilon'(t_nw_n), t_nw_n \rangle_{L^2} = 0$. Then, by \eqref{action functional under constraint} and \eqref{BL lemma}, we prove that 
\begin{align*}
I_\epsilon(t_nw_n)&= \frac{p-1}{2(p+1)}\|t_nw_n\|_{H^1_{P_\epsilon}}^2= t_n^2\cdot\frac{p-1}{2(p+1)}\|w_n\|_{H^1_{P_\epsilon}}^2\\
&\leq (1-\eta)^2 \cdot\frac{p-1}{2(p+1)}\|v_n\|_{H^1_{P_\epsilon}}^2+o_n(1)\\
&=(1-\eta)^2 I_\epsilon(v_n)+o_n(1)=(1-\eta)^2\mathcal{C}_\epsilon +o_n(1).
\end{align*}
However, this contradicts to minimality of $\mathcal{C}_\epsilon$. If $\delta := \langle I_\epsilon'(v_0), v_0 \rangle_{L^2} < 0$, repeating the same argument but switching the role of $v_0$ with $w_n$, we can again deduce a contradiction. Therefore, the claim is proved.

Finally, by the lower semi-continuity of the norm $\|\cdot\|_{H^1_{P_\epsilon}}$, we show that $v_0$ achieves the minimal energy, 
$$I_\epsilon(v_0) = \frac{p-1}{2(p+1)}\|v_0\|_{H^1_{P_\epsilon}} \leq \frac{p-1}{2(p+1)}\lim_{n\to\infty}\|v_n\|_{H^1_{P_\epsilon}} = \lim_{n\to\infty}I_\epsilon(v_n) = \mathcal{C}_\epsilon.$$
This completes the proof by setting $Q_\epsilon^J := v_0$. 
\end{proof}

\begin{prop}[Convergence of ground states]\label{convergence}
Suppose that \eqref{uniform lower bound} (as well as \eqref{subcritical} for NLS) holds. Let $\{Q_\epsilon^J\}_{\epsilon>0}$ be the family of ground states for the higher-order equation \eqref{hNLS} (resp., \eqref{hNLH}) given by Proposition \ref{existence}.
Then,
$$\lim_{\epsilon\to 0} \|Q_\epsilon^J-\tilde{Q}_0\|_{H_{P_\epsilon}^1}=0,$$
where $\tilde{Q}_0$ is a ground state to the second-order equation \eqref{reference NLS} (resp., \eqref{reference NLH}). 
\end{prop}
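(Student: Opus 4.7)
I would adapt the concentration-compactness scheme from the proof of Proposition~\ref{existence} to the regime $\epsilon\to 0$, identifying the subsequence limit as a ground state of the second-order equation. Uniform bounds come first: the identity \eqref{action functional under constraint} gives $\|Q_\epsilon^J\|_{H^1_{P_\epsilon}}^2=\tfrac{2(p+1)}{p-1}\mathcal{C}_\epsilon$, Lemma~\ref{energy level lemma} bounds $\mathcal{C}_\epsilon$ from above by $\mathcal{C}_0+o(1)$ and away from zero, and the uniform ellipticity \eqref{uniform lower bound} promotes this to a uniform $H^1$ bound on the family $\{Q_\epsilon^J\}$. Running the Levy concentration argument of Proposition~\ref{existence} rules out vanishing (which would force $\mathcal{N}(Q_\epsilon^J)\to 0$ via \eqref{nonlinear bound 1}/\eqref{nonlinear bound 2}, contradicting $\mathcal{C}_\epsilon\gtrsim 1$), and after translating $Q_\epsilon^J$ by a suitable sequence $x_\epsilon\in\mathbb{R}^d$ a subsequence converges weakly in $H^1$ to some nonzero $Q_*$.

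Next I would pass to the limit in the Euler--Lagrange equation $(P_\epsilon+1)Q_\epsilon^J=\mathcal{N}'(Q_\epsilon^J)$ tested against $\varphi\in C_c^\infty(\mathbb{R}^d)$: moving derivatives onto $\varphi$, each higher-order contribution equals
$$c_\alpha\,\epsilon^{|\alpha|-2}\int Q_\epsilon^J\,\overline{(-i\nabla)^\alpha\varphi}\,dx=O(\epsilon),$$
the $(-\Delta+1)$ part passes by weak $H^1$ convergence, and the nonlinearity passes by local Rellich compactness (combined with the Hardy--Littlewood--Sobolev inequality in the Hartree case). Hence $(-\Delta+1)Q_*=\mathcal{N}'(Q_*)$; in particular $Q_*$ is admissible for the second-order Nehari problem and $I_0(Q_*)\ge\mathcal{C}_0$.

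To match energies I would apply the Brezis--Lieb decomposition $Q_\epsilon^J=Q_*+r_\epsilon$ with $r_\epsilon\rightharpoonup 0$ in $H^1$, obtaining
$$\mathcal{N}(Q_\epsilon^J)=\mathcal{N}(Q_*)+\mathcal{N}(r_\epsilon)+o(1),\qquad\|Q_\epsilon^J\|_{H^1}^2=\|Q_*\|_{H^1}^2+\|r_\epsilon\|_{H^1}^2+o(1),$$
and rule out any energy mass in $r_\epsilon$ by the Nehari rescaling device already used inside the proof of Proposition~\ref{existence}: if $\langle I_0'(r_\epsilon),r_\epsilon\rangle_{L^2}$ were sign-definite along a subsequence, rescaling $r_\epsilon\mapsto t_\epsilon r_\epsilon$ with $t_\epsilon\le 1-\eta$ would produce a strictly sub-$\mathcal{C}_0$ admissible competitor, contradicting the lower bound $\liminf\mathcal{C}_\epsilon\ge\mathcal{C}_0$ that one reads off by comparing the limiting energy of the split pieces with Lemma~\ref{energy level lemma}. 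This forces $r_\epsilon\to 0$ strongly in $H^1$, yields $\mathcal{C}_\epsilon\to\mathcal{C}_0$, and shows that $\tilde{Q}_0:=Q_*$ is a ground state.

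Finally, to upgrade to convergence in the $\epsilon$-dependent norm $H^1_{P_\epsilon}$, I would expand $\|Q_\epsilon^J-\tilde{Q}_0\|_{H^1_{P_\epsilon}}^2$ as a square. Smoothness of $\tilde{Q}_0$ forces $\|\tilde{Q}_0\|_{H^1_{P_\epsilon}}^2\to\|\tilde{Q}_0\|_{H^1}^2$ since each higher-order term carries a factor $\epsilon^{|\alpha|-2}$; the Nehari identity gives $\|Q_\epsilon^J\|_{H^1_{P_\epsilon}}^2=(p+1)\mathcal{N}(Q_\epsilon^J)\to(p+1)\mathcal{N}(\tilde{Q}_0)=\|\tilde{Q}_0\|_{H^1}^2$; and the cross term $\int Q_\epsilon^J\,\overline{(P_\epsilon+1)\tilde{Q}_0}\,dx$ tends to $\|\tilde{Q}_0\|_{H^1}^2$ via weak convergence tested against the smooth $H^\infty$ function $(P_\epsilon+1)\tilde{Q}_0\to(-\Delta+1)\tilde{Q}_0$. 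The delicate step of the whole plan is the dichotomy exclusion above: because the ambient norm, the functional, and the Nehari constraint all vary with $\epsilon$, the rescaling argument must be synchronized with the comparison $\mathcal{C}_\epsilon\to\mathcal{C}_0$ supplied by Lemma~\ref{energy level lemma}, and one must ensure that the admissibility correction $t_\epsilon\to 1$ is uniform in $\epsilon$.
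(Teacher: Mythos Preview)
Your overall strategy is sound and would go through, but it is more elaborate than the paper's and the dichotomy-exclusion step is not cleanly argued. Two points:

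\textbf{Where the sketch wobbles.} After the Brezis--Lieb split you invoke the rescaling device with ``$t_\epsilon\le 1-\eta$'' and speak of a ``sub-$\mathcal{C}_0$ competitor''. But in this situation one actually has $\langle I_\epsilon'(r_\epsilon),r_\epsilon\rangle_{L^2}=o(1)$ (subtract the Nehari identity for $Q_\epsilon^J$ from that for $Q_*$ and use the splittings), so the correcting factor satisfies $t_\epsilon\to 1$, not $t_\epsilon\le 1-\eta$. The contradiction is not with a yet-to-be-proved ``$\liminf\mathcal{C}_\epsilon\ge\mathcal{C}_0$''; rather, from $Q_*$ being admissible one gets
\[
\mathcal{C}_\epsilon=\tfrac{p-1}{2}\mathcal{N}(Q_\epsilon^J)=\tfrac{p-1}{2}\mathcal{N}(Q_*)+\tfrac{p-1}{2}\mathcal{N}(r_\epsilon)+o(1)\ge \mathcal{C}_0+\tfrac{p-1}{2}\mathcal{N}(r_\epsilon)+o(1),
\]
which together with $\limsup\mathcal{C}_\epsilon\le\mathcal{C}_0$ from Lemma~\ref{energy level lemma} forces $\mathcal{N}(r_\epsilon)\to 0$ and then $\|r_\epsilon\|_{H^1_{P_\epsilon}}^2=(p+1)\mathcal{N}(r_\epsilon)+o(1)\to 0$. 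So your route works, but the narrative around the rescaling is off.

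\textbf{How the paper argues instead.} The paper bypasses Brezis--Lieb entirely. After identifying the weak limit $\tilde Q_0$ as a solution (hence $I_0(\tilde Q_0)\ge\mathcal{C}_0$) and noting $\tilde Q_0\in H^\ell$ for all $\ell$, it writes the single chain
\[
\mathcal{C}_0\le \tfrac{p-1}{2(p+1)}\|\tilde Q_0\|_{H^1}^2\le \tfrac{p-1}{2(p+1)}\big(\|\tilde Q_0\|_{H^1_{P_\epsilon}}^2+\|Q_\epsilon^J-\tilde Q_0\|_{H^1_{P_\epsilon}}^2\big)=\mathcal{C}_\epsilon-\tfrac{p-1}{p+1}\,\mathrm{Re}\langle \tilde Q_0,\,Q_\epsilon^J-\tilde Q_0\rangle_{H^1_{P_\epsilon}},
\]
and observes that the cross term vanishes as $\epsilon\to 0$ (smoothness of $\tilde Q_0$ plus weak convergence). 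Sending $\epsilon\to 0$ and invoking $\limsup\mathcal{C}_\epsilon\le\mathcal{C}_0$ yields, in one stroke, that $\tilde Q_0$ attains $\mathcal{C}_0$, that $\mathcal{C}_\epsilon\to\mathcal{C}_0$, and that $\|Q_\epsilon^J-\tilde Q_0\|_{H^1_{P_\epsilon}}\to 0$. This is strictly shorter than your two-stage plan (strong $H^1$ first, then upgrade), and it avoids the somewhat delicate synchronization of the $\epsilon$-dependent Nehari manifold with the limiting one that you flagged as the tricky point.
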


\begin{proof}
By \eqref{uniform lower bound}, \eqref{action functional under constraint} and Lemma \ref{ground state energy level}, we see that $\{Q_\epsilon^J\}_{\epsilon>0}$ is bounded in $H^1$, 
$$\gamma\|Q_\epsilon^J\|_{H^1}^2 \leq \|Q_\epsilon^J\|_{H^1_{P_\epsilon}}^2 = \frac{2(p+1)}{p-1}I_\epsilon(Q_\epsilon^J) = \frac{2(p+1)}{p-1}\mathcal{C}_\epsilon=\frac{2(p+1)}{p-1}\mathcal{C}_0+o_\epsilon(1).$$
Hence, $Q_\epsilon^J$ weakly subsequentially converges to $\tilde{Q}_0$ in $H^1$ as $\epsilon\to 0$. As in the proof of Proposition \ref{existence}, one can show that 
\[
\int_{B_1(0)}|Q_\epsilon^J|^2\,dx \geq \frac{M_0}{2},
\]
which implies $\tilde{Q}_0$ is nontrivial. 

We claim that $\tilde{Q}_0$ is a smooth solution to \eqref{reference NLS} (resp., \eqref{reference NLH}). To show the claim, we recall that for any $\phi \in C_c^\infty$,
$$\langle (P_\epsilon+1) Q_\epsilon^J- \mathcal{N}'(Q_\epsilon^J), \phi \rangle_{L^2} = 0.$$
However, by the weak convergence of $Q_\epsilon^J$, up to a subsequence, we have
$$\lim_{\ep \to 0}\langle (P_\epsilon+1) Q_\epsilon^J, \phi \rangle_{L^2} = \lim_{\ep \to 0}\langle  Q_\epsilon^J, (P_\epsilon+1)\phi \rangle_{L^2}= \lim_{\ep \to 0}\langle  Q_\epsilon^J, (-\Delta+1)\phi \rangle_{L^2}= \langle  \tilde{Q}_0, (-\Delta+1)\phi \rangle_{L^2} $$
and
$$\lim_{\epsilon\to0}\langle \mathcal{N}'(Q_\epsilon), \phi \rangle_{L^2} = \langle \mathcal{N}'(\tilde{Q}_0), \phi \rangle_{L^2}.$$
Thus, sending $\epsilon\to 0$, we show that 
$$\langle (-\Delta+1) \tilde{Q}_0- \mathcal{N}'(\tilde{Q}_0), \phi \rangle_{L^2} = 0.$$
In other words, $\tilde{Q}_0$ is a weak solution to \eqref{reference NLS} (resp., \eqref{reference NLH}). Then, by the elliptic regularity (see \cite{GT}, one can show that $\tilde{Q}_0 \in H^\ell$ for every $\ell\in \mathbb{N}$.

Next, using \eqref{action functional under constraint}, we write 
\begin{equation}\label{convergence proof}
\begin{aligned}
\mathcal{C}_0 &\leq I_0(\tilde{Q}_0) = \frac{p-1}{2(p+1)}\|\tilde{Q}_0\|_{H^1}^2\\
&\leq \frac{p-1}{2(p+1)}\left(\|\tilde{Q}_0\|_{H^1_{P_\epsilon}}^2 +\|Q_\epsilon^J-\tilde{Q}_0\|_{H^1_{P_\epsilon}}^2\right)\\
&= \frac{p-1}{2(p+1)}\|Q_\epsilon^J\|_{H^1_{P_\epsilon}}^2-\frac{p-1}{p+1}\cdot\textup{Re}\langle \tilde{Q}_0, Q_\epsilon^J-\tilde{Q}_0\rangle_{H^1_{P_\epsilon}}\\
&= I_\epsilon(Q_\epsilon^J)-\frac{p-1}{p+1}\cdot\textup{Re}\langle \tilde{Q}_0, Q_\epsilon^J-\tilde{Q}_0\rangle_{H^1_{P_\epsilon}}.
\end{aligned}
\end{equation}
However, since $\tilde{Q}_0$ is smooth and $Q_\epsilon^J\rightharpoonup\tilde{Q}_0$ in $H^1$ as $\epsilon\to 0$, we have
$$\langle \tilde{Q}_0, Q_\epsilon^J-\tilde{Q}_0\rangle_{H^1_{P_\epsilon}}=\langle \tilde{Q}_0, Q_\epsilon^J-\tilde{Q}_0\rangle_{H^1}+\langle (P_\epsilon-(-\Delta))\tilde{Q}_0, Q_\epsilon^J-\tilde{Q}_0\rangle_{L^2}\to 0$$
as $\epsilon\to 0$. Thus, by \eqref{action functional under constraint} again and Lemma \ref{energy level lemma}, we get
$$\mathcal{C}_0\leq \mathcal{C}_\epsilon+o_\epsilon(1)\leq \mathcal{C}_0+o_\epsilon(1).$$
Sending $\epsilon\to 0$ in \eqref{convergence proof}, we conclude that $\tilde{Q}_0$ achieves the minimum value $\mathcal{C}_0$ of the action functional $I_0$ and that $\|Q_\epsilon^J-\tilde{Q}_0\|_{H^1_{P_\epsilon}}\to 0$ as $\epsilon\to 0$.
\end{proof}

\begin{rem}
Let $Q_0$ be the radially symmetric positive ground state for  \eqref{reference NLS} (resp., \eqref{reference NLH}). By uniqueness of a ground state to the second-order equation, there exist $\theta \in \mathbb{R}$ and $x_0 \in \mathbb{R}^d$ such that
$Q_0(x) = e^{i\theta}\tilde{Q}_0(x-x_0)$. Then, the modified profile $e^{i\theta}Q_\epsilon^J(\cdot-x_0)$, which is also a ground state, converges to $Q_0$ as $\epsilon\to0$.
\end{rem}

\begin{prop}[Upgraded convergence]\label{upgraded convergence}
Suppose that \eqref{uniform lower bound} (as well as \eqref{subcritical} for NLS) holds. For $\epsilon>0$, let $u_\epsilon\in H_{P_\epsilon}^1$ be a solution to the higher-order equation \eqref{hNLS} (resp., \eqref{hNLH}), which is not necessarily a ground state. Let $Q_0$ be the unique ground state to the second-order equation \eqref{reference NLS} (resp., \eqref{reference NLH}).
\begin{enumerate}
\item If $u_\epsilon\to Q_0$ in $H^1$ as $\epsilon\to 0$, then $u_\epsilon\to Q_0$ in $H^\ell$ as $\epsilon\to 0$ for all $\ell\in\mathbb{N}$.
\item As a consequence, for all $\ell\in\mathbb{N}$, $\|u_\epsilon\|_{H^\ell}$ is bounded uniformly in $0<\epsilon\leq 1$.
\end{enumerate}
\end{prop}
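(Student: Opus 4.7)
The plan is to establish uniform-in-$\epsilon$ bounds on $u_\epsilon$ in $H^\ell$ for every $\ell\in\mathbb{N}$ by a bootstrap argument driven by the uniform coercivity \eqref{uniform lower bound}, and then deduce the claimed $H^\ell$ convergence from the assumed $H^1$ convergence by interpolation. Observe that (2) is an immediate by-product of (1) applied to $u_\epsilon$ alone, so the substantive work is (1). The key structural fact used throughout is that $P_\epsilon$, being a Fourier multiplier with constant coefficients, commutes with every differential operator $\partial^\beta$ and with the fractional powers $(1-\Delta)^s$.

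For the main estimate, fix an integer $\ell\geq 2$ and, for each multi-index $\beta$ with $|\beta|=\ell-1$, differentiate the equation $(P_\epsilon+1)u_\epsilon=\mathcal{N}'(u_\epsilon)$ to get $(P_\epsilon+1)\partial^\beta u_\epsilon=\partial^\beta\mathcal{N}'(u_\epsilon)$. Pairing with $\partial^\beta u_\epsilon$ in $L^2$, then applying \eqref{uniform lower bound} on the left and Cauchy-Schwarz on the right, yields
\[
\gamma\|\partial^\beta u_\epsilon\|_{H^1}^2\leq\langle(P_\epsilon+1)\partial^\beta u_\epsilon,\partial^\beta u_\epsilon\rangle_{L^2}\leq\|\mathcal{N}'(u_\epsilon)\|_{H^{\ell-1}}\|u_\epsilon\|_{H^{\ell-1}},
\]
so summing over $|\beta|=\ell-1$ gives
\[
\|u_\epsilon\|_{H^\ell}^2\lesssim\|u_\epsilon\|_{H^{\ell-1}}^2+\|\mathcal{N}'(u_\epsilon)\|_{H^{\ell-1}}\|u_\epsilon\|_{H^{\ell-1}}.
\]
Thus it suffices to control $\|\mathcal{N}'(u_\epsilon)\|_{H^{\ell-1}}$ by a lower-order Sobolev norm. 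In the base case $\ell=2$, the embedding $H^1\hookrightarrow L^6$ in $d\leq 3$ controls $\|\nabla(|u|^{2k}u)\|_{L^2}$ by distributing a single derivative and applying Hölder together with Young's inequality to absorb a stray $\|u_\epsilon\|_{H^2}$; in the Hartree case, writing $V:=|x|^{-1}*|u|^2$ and using the Poisson identity $-\Delta V=4\pi|u|^2$ together with HLS/Sobolev yields $\|V\|_{L^\infty}+\|\nabla V\|_{L^6}\lesssim\|u\|_{H^1}^2$, hence $\|\mathcal{N}'(u)\|_{H^1}\lesssim\|u\|_{H^1}^3$. For the inductive step $\ell\geq 3$, the embedding $H^{\ell-1}\hookrightarrow L^\infty$ in $d\leq 3$ is available, so standard Moser/tame product estimates give $\|\mathcal{N}'(u)\|_{H^{\ell-1}}\leq C(\|u\|_{H^{\ell-1}})\|u\|_{H^{\ell-1}}$, promoting the uniform $H^{\ell-1}$ bound to a uniform $H^\ell$ bound.

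Once uniform $H^{\ell+1}$ bounds are in hand for each $\ell$, and noting that $Q_0\in H^{\ell+1}$ by the smoothness of the second-order ground state, the convergence part of (1) follows from the interpolation inequality
\[
\|u_\epsilon-Q_0\|_{H^\ell}\leq\|u_\epsilon-Q_0\|_{H^1}^{1/\ell}\|u_\epsilon-Q_0\|_{H^{\ell+1}}^{(\ell-1)/\ell},
\]
combined with the hypothesis $\|u_\epsilon-Q_0\|_{H^1}\to 0$. The main obstacle I expect is the base case $\ell=2$: without an $L^\infty$-bound on $u_\epsilon$ at that stage, the nonlinear estimate must be closed by carefully distributing derivatives and, in the Hartree case, by handling the non-local factor $V$ via either HLS or the Poisson equation. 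Once $\ell=2$ is established, the inductive machinery is entirely standard.
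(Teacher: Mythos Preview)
Your approach is correct but differs from the paper's. The paper works directly with the difference $r_\epsilon=u_\epsilon-Q_0$: writing the equation as
\[
(P_\epsilon+1)r_\epsilon=(-\Delta-P_\epsilon)Q_0+\mathcal{N}'(Q_0+r_\epsilon)-\mathcal{N}'(Q_0),
\]
the symbol bound $1+p_\epsilon(\xi)\geq\gamma(1+|\xi|^2)$ immediately gives $\gamma\|r_\epsilon\|_{H^{\ell+1}}\leq\|(P_\epsilon+1)r_\epsilon\|_{H^{\ell-1}}$, and the right-hand side tends to zero by smoothness of $Q_0$ (for the first term) and by the multilinear estimates combined with the inductive hypothesis $\|r_\epsilon\|_{H^\ell}\to0$ (for the nonlinear difference, which carries a factor of $r_\epsilon$). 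Thus convergence is proved directly by induction, and the uniform bounds are the corollary. Your route instead bootstraps uniform $H^\ell$ bounds on $u_\epsilon$ itself from the equation $(P_\epsilon+1)u_\epsilon=\mathcal{N}'(u_\epsilon)$, then recovers convergence by interpolation. This costs you the Young-inequality absorption step in the base case $\ell=2$ (since $\|\mathcal{N}'(u_\epsilon)\|_{H^1}$ carries a stray $\|u_\epsilon\|_{H^2}$ in $d=2,3$), as well as the interpolation step at the end; the paper's argument avoids both because the smoothness of $Q_0$ and the factor of $r_\epsilon$ in the nonlinear difference make every right-hand side small outright. On the other hand, your scheme has the mild advantage of producing uniform bounds from a uniform $H^1$ bound alone, without invoking convergence until the very last step.
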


\begin{proof}
We prove the proposition by induction. Let $r_\epsilon=u_\epsilon-Q_0$ be the difference between two solutions. Suppose that $\|r_\epsilon\|_{H^\ell}\to 0$ for some $\ell\in\mathbb{N}$. Then, by the equations, we have
\begin{align*}
(P_\epsilon+1)r_\epsilon&=(-\Delta-P_\epsilon)Q_0+(P_\epsilon+1)u_\epsilon-(-\Delta+1)Q_0\\
&=(-\Delta-P_\epsilon)Q_0+\mathcal{N}'(u_\epsilon)-\mathcal{N}'(Q_0).
\end{align*}
Thus, it follows from \eqref{uniform lower bound} that 
\begin{align*}
\gamma\|r_\epsilon\|_{H^{\ell+1}}&\leq\|(P_\epsilon+1)r_\epsilon\|_{H^{\ell-1}}\\
&\leq\|(-\Delta-P_\epsilon)Q_0\|_{H^{\ell-1}}+\|\mathcal{N}'(Q_0+r_\epsilon)-\mathcal{N}'(Q_0)\|_{H^{\ell-1}}.
\end{align*}
For the first term on the right hand side, by smoothness of $Q_0$, $\|(-\Delta-P_\epsilon)Q_0\|_{H^{\ell-1}}\to 0$. For the second term, distributing derivatives and then applying Lemma \ref{multilinear estimate}, one can show that
$$\|\mathcal{N}'(Q_0+r_\epsilon)-\mathcal{N}'(Q_0)\|_{H^{\ell-1}}\lesssim\Big\{\|Q_0\|_{H^\ell}+\|r_\epsilon\|_{H^\ell}\Big\}^{p-1}\|r_\epsilon\|_{H^\ell}\to 0,$$
where $p=2k+1$ for \eqref{hNLS} (resp., $p=3$ for \eqref{hNLH}). Therefore, we conclude that $\|r_\epsilon\|_{H^{\ell+1}}\to0$.
\end{proof}

\section{Non-degeneracy estimates}

Let $\{u_\epsilon\}_{\epsilon>0}$ be a family of real-valued solutions to the higher-order equation such that $u_\epsilon\to Q_0$ in $H^1$ (as well as in $H^\ell$ for all $\ell\in\mathbb{N}$ by Proposition \ref{upgraded convergence}) as $\epsilon\to 0$, where $Q_0$ is the unique radially symmetric positive ground state for the second-order equation \eqref{reference NLS} (resp., \eqref{reference NLH}). For notational convenience, we denote $u_0:=Q_0.$ For $0\leq\epsilon\leq\epsilon_0$ (including 0), we consider the linear operators $\mathcal{L}_\epsilon^\pm: H_{P_\epsilon}^1\to H_{P_\epsilon}^{-1}$, defined by 
\begin{equation}\label{L operator}
\begin{aligned}
\left\{\begin{aligned}
\mathcal{L}_\epsilon^+:&=P_\epsilon+1-\mathcal{N}_{u_\epsilon}^+\\
\mathcal{L}_\epsilon^-:&=P_\epsilon+1-\mathcal{N}_{u_\epsilon}^-,
\end{aligned}\right.
\end{aligned}
\end{equation}
where
\begin{equation}\label{N+}
\mathcal{N}_{u}^+(g):=\left\{\begin{aligned}
&(2k+1)u^{2k}g&&\textup{(for NLS)}\\
&2(|x|^{-1}*(u g))u+(|x|^{-1}*u^2)g&&\textup{(for NLH)}
\end{aligned}\right.
\end{equation}
and
\begin{equation}\label{N-}
\mathcal{N}_{u}^-(g):=\left\{\begin{aligned}
&u^{2k}g&&\textup{(for NLS)}\\
&(|x|^{-1}*u^2)g&&\textup{(for NLH)}.
\end{aligned}\right.
\end{equation}
These linear operators naturally appear as the real and the imaginary parts of the linearized operator at the solution $u_\epsilon$. Factorizing out the differential operator $(1+P_\epsilon)$ in a symmetric form, we write 
\begin{equation}\label{LA relation}
\mathcal{L}_\epsilon^\pm=\sqrt{1+P_\epsilon}(\textup{Id}-\mathcal{A}_\epsilon^\pm) \sqrt{1+P_\epsilon},
\end{equation}
where
\begin{align*}
\left\{\begin{aligned}
\mathcal{A}_\epsilon^+:&=\frac{1}{\sqrt{1+P_\epsilon}}\mathcal{N}_{u_\epsilon}^+\frac{1}{\sqrt{1+P_\epsilon}}\\
\mathcal{A}_\epsilon^-:&=\frac{1}{\sqrt{1+P_\epsilon}}\mathcal{N}_{u_\epsilon}^-\frac{1}{\sqrt{1+P_\epsilon}}.
\end{aligned}\right.
\end{align*}
In this section, we prove non-degeneracy of the solution $u_\epsilon$, and obtain uniform lower bounds for the linear operators $(\textup{Id}-\mathcal{A}_\epsilon^\pm)$, which are our main analytic tools.

To begin with, we consider the base case $\epsilon=0$. By the non-degeneracy of the ground state $Q_0$ and the relation \eqref{LA relation}, we have
$$\textup{Ker}(\textup{Id}-\mathcal{A}_0^+)=\textup{span}\Big\{\partial_{x_1}\sqrt{1-\Delta}\,Q_0,\cdots,\partial_{x_d}\sqrt{1-\Delta}\,Q_0\Big\}$$
and
$$\textup{Ker}(\textup{Id}-\mathcal{A}_0^-)=\textup{span}\Big\{\sqrt{1-\Delta}\,Q_0\Big\}.$$
By the equation, the operator $\mathcal{A}_0^\pm$ sends an element of $\textup{Ker}(\textup{Id}-\mathcal{A}_0^\pm)$ to the same function, and thus $(\textup{Id}-\mathcal{A}_0^\pm)$ maps $(\textup{Ker}(\textup{Id}-\mathcal{A}_0^\pm))^\perp\subset L^2(\mathbb{R}^d;\mathbb{R})$ to itself, where $A^\perp\subset H$ denotes the subspace orthogonal to $A$ in the Hilbert space $H$. Moreover, the operator $(\textup{Id}-\mathcal{A}_0^\pm) $ satisfies the following lower bounds.

\begin{prop}[Non-degeneracy estimates; base case]\label{non-degeneracy estimate 0}
There exists $\beta_0>0$ such that
$$\|(\textup{Id}-\mathcal{A}_0^\pm) g\|_{L^2(\mathbb{R}^d;\mathbb{R})}\geq \beta_0\|g\|_{L^2(\mathbb{R}^d;\mathbb{R})}$$
for all $g\in(\textup{Ker}(\textup{Id}-\mathcal{A}_0^\pm))^\perp\subset L^2(\mathbb{R}^d;\mathbb{R})$.
\end{prop}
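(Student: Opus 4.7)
My plan is to derive the lower bound from the spectral theorem, using the fact that $\mathcal{A}_0^\pm$ is compact and self-adjoint on $L^2(\mathbb{R}^d;\mathbb{R})$. Once this is granted, the spectrum of $\mathcal{A}_0^\pm$ is a sequence of real eigenvalues accumulating only at $0$. Since $\textup{Ker}(\textup{Id}-\mathcal{A}_0^\pm)$ has just been identified as the eigenspace for $\lambda=1$ (of finite dimension), the eigenvalue $1$ is isolated in the spectrum, and
$$\beta_0 := \inf\bigl\{|1-\lambda| : \lambda \in \textup{Spec}(\mathcal{A}_0^\pm),\ \lambda \neq 1\bigr\} > 0.$$
Expanding any $g \in (\textup{Ker}(\textup{Id}-\mathcal{A}_0^\pm))^\perp$ in an orthonormal eigenbasis $\{e_n\}$ of $\mathcal{A}_0^\pm$ then yields the desired estimate
$$\|(\textup{Id}-\mathcal{A}_0^\pm)g\|_{L^2}^2 = \sum_{\lambda_n \neq 1} |1-\lambda_n|^2\,|\langle g,e_n\rangle_{L^2}|^2 \geq \beta_0^2 \|g\|_{L^2}^2.$$

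The substantive step is therefore compactness of $\mathcal{A}_0^\pm = (1-\Delta)^{-1/2}\mathcal{N}_{Q_0}^\pm (1-\Delta)^{-1/2}$; self-adjointness is immediate from the symmetry of $\mathcal{N}_{Q_0}^\pm$ (a Fubini computation in the Hartree case). The key building block is that the operator $M_{Q_0}(1-\Delta)^{-1/2}$ is compact on $L^2(\mathbb{R}^d)$: indeed $(1-\Delta)^{-1/2}$ maps $L^2$ boundedly to $H^1$, and multiplication by the smooth exponentially decaying function $Q_0$ is compact from $H^1$ into $L^2$ via the standard combination of the local Rellich-Kondrachov embedding and a tail cutoff using the decay of $Q_0$. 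In the NLS case, $\mathcal{N}_{Q_0}^\pm$ is simply multiplication by a power of $Q_0$, and compactness of $\mathcal{A}_0^\pm$ follows at once. In the NLH case, the local piece $(|x|^{-1}*Q_0^2)\,g$ is multiplication by a potential that also tends to $0$ at infinity (behaving like $|x|^{-1}$ by Newton's theorem), while the nonlocal piece $g \mapsto 2Q_0\bigl(|x|^{-1}*(Q_0 g)\bigr)$ factors as $M_{Q_0}\circ(|x|^{-1}*)\circ M_{Q_0}$; sandwiched between the two resolvent factors it becomes a composition of the compact operator $M_{Q_0}(1-\Delta)^{-1/2}$ (and its adjoint) with the bounded convolution $|x|^{-1}* : L^{6/5}\to L^6$ (Hardy-Littlewood-Sobolev), hence is compact.

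The only delicate point I anticipate is keeping track of the $L^p$-mapping properties in the Hartree nonlocal term, which is routine once the target spaces are pinned down. With compactness and self-adjointness in hand, the spectral-gap argument in the first paragraph closes the proof.
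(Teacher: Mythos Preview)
Your argument is correct and follows the same overall strategy as the paper: show that $\mathcal{A}_0^\pm$ is compact on $L^2$, then deduce the lower bound by abstract functional analysis. The technical implementations differ in two places. For compactness, the paper takes a shorter path: it writes down the integral kernel of $\mathcal{N}_{Q_0}^\pm(1-\Delta)^{-1/2}$ explicitly in terms of the Bessel potential $G_{-1}=((1+|\xi|^2)^{-1/2})^\vee$ and checks that this kernel lies in $L^2(\mathbb{R}^d_x\times\mathbb{R}^d_y)$ (using the smoothness and rapid decay of $Q_0$), so the operator is Hilbert--Schmidt; composing with the bounded factor $(1-\Delta)^{-1/2}$ then gives compactness of $\mathcal{A}_0^\pm$ without any $L^p$ routing through Hardy--Littlewood--Sobolev. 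Your Rellich--Kondrachov plus tail-cutoff argument is a perfectly valid alternative, and in the Hartree nonlocal term it indeed works once you note (as you do) that one $M_{Q_0}(1-\Delta)^{-1/2}$ factor is already compact $L^2\to L^2$ and the remaining factors are bounded. For the conclusion, the paper simply invokes the Fredholm alternative (closed range plus injectivity on $(\mathrm{Ker})^\perp$ gives a bounded inverse there), whereas you use the spectral theorem for compact self-adjoint operators to exhibit $\beta_0$ explicitly as a spectral gap; both are standard and equivalent here since $\mathcal{A}_0^\pm$ is self-adjoint.
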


\begin{proof}
We claim that both $\frac{1}{\sqrt{1-\Delta}}\mathcal{N}_{Q_0}^+\frac{1}{\sqrt{1-\Delta}}$ and $\frac{1}{\sqrt{1-\Delta}}\mathcal{N}_{Q_0}^-\frac{1}{\sqrt{1-\Delta}}$ are compact on $L^2$. Indeed, for NLS, the integral kernel of $\mathcal{N}_{Q_0}^+\frac{1}{\sqrt{1-\Delta}}$ (or $\mathcal{N}_{Q_0}^-\frac{1}{\sqrt{1-\Delta}}$, respectively) is given by
$$(2k+1)Q_0(x)^{2k}G_{-1}(x-y)\quad\Big(\textup{or }Q_0(x)^{2k}G_{-1}(x-y)\textup{, respectively}\Big),$$
where $G_{-1}(x)=((1+|\xi|^2)^{-1})^\vee(x)$ is the Bessel potential. For NLH, it is given by
\begin{align*}
&2\int_{\mathbb{R}^3}\frac{Q_0(x)Q_0(z)}{|x-z|}G_{-1}(z-y)dz+(|x|^{-1}*Q_0^2)(x)G_{-1}(x-y)\\
&\Big(\textup{or }(|x|^{-1}*Q_0^2)(x)G_{-1}(x-y)\textup{, respectively}\Big).
\end{align*}
All of the above kernels are contained in $L^2(\mathbb{R}_x^d\times\mathbb{R}_y^d)$, because $Q_0$ is smooth and rapidly decaying. Therefore, the associated operators are Hilbert-Schmidt (so,  compact on $L^2(\mathbb{R}^d)$). Since composition of a compact operator and a bounded operator is compact, this proves the claim. As a consequence, by the Fredholm alternative, the proposition is proved.
\end{proof}

Next, we show that the non-degeneracy of the ground state $Q_0$ is stable along the family of solutions which converges to the ground state $Q_0$.
\begin{prop}[Stability of non-degeneracy]\label{Non-degeneracy epsilon}
Let $\{u_\epsilon\}_{\epsilon>0}$ be a family of real-valued solutions to the higher-order equation \eqref{hNLS} (resp., \eqref{hNLH}) such that $u_\epsilon\to Q_0$ in $H^1$ as $\epsilon\to 0$.  Then, there exists $\epsilon_0>0$ such that
$$\textup{Ker}\mathcal{L}_\epsilon^+=\textup{span}\Big\{\partial_{x_1}u_\epsilon,\cdots,\partial_{x_d}u_\epsilon\Big\}\quad\textup{and}\quad
\textup{Ker}\mathcal{L}_\epsilon^-=\textup{span}\big\{u_\epsilon\big\}$$
for $0<\epsilon\leq\epsilon_0$. Equivalently, we have
\begin{equation}\label{eq: Non-degeneracy epsilon}
\textup{Ker}(\textup{Id}-\mathcal{A}_\epsilon^+)=\textup{span}\Big\{\partial_{x_1}\sqrt{1+P_\epsilon}\,u_\epsilon,\cdots,\partial_{x_d}\sqrt{1+P_\epsilon}\,u_\epsilon\Big\}
\end{equation}
and
\begin{equation}\label{eq: Non-degeneracy epsilon'}
\textup{Ker}(\textup{Id}-\mathcal{A}_\epsilon^-)=\textup{span}\Big\{\sqrt{1+P_\epsilon}\,u_\epsilon\Big\}.
\end{equation}
\end{prop}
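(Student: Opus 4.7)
The plan is to prove the two inclusions separately. The inclusions $\supseteq$ are immediate from the symmetries: translation invariance of \eqref{hNLS} (resp.\ \eqref{hNLH}) yields $\mathcal{L}_\epsilon^+ \partial_{x_j} u_\epsilon = 0$ upon differentiating $(P_\epsilon+1)u_\epsilon = \mathcal{N}'(u_\epsilon)$ in $x_j$, and the phase invariance $u_\epsilon \mapsto e^{i\theta}u_\epsilon$ yields $\mathcal{L}_\epsilon^- u_\epsilon = 0$. The elements $\partial_{x_j}u_\epsilon$ (resp.\ $u_\epsilon$) are linearly independent for small $\epsilon$ since they converge in $H^1$ to the linearly independent $\partial_{x_j}Q_0$ (resp.\ to $Q_0\not\equiv 0$), where linear independence at $\epsilon=0$ is guaranteed by non-degeneracy of $Q_0$. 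The equivalent reformulations \eqref{eq: Non-degeneracy epsilon} and \eqref{eq: Non-degeneracy epsilon'} then follow at once from \eqref{LA relation}, since $\sqrt{1+P_\epsilon}$ is an isomorphism $H^1_{P_\epsilon}\to L^2$.

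For the reverse inclusion $\subseteq$ I would argue by contradiction. Since the real and imaginary parts of any complex kernel element lie in the kernel separately, it suffices to rule out real-valued ones. Assume that there exist $\epsilon_n\downarrow 0$ and real-valued $\phi_n\in\textup{Ker}\,\mathcal{L}_{\epsilon_n}^+$ that are $H^1_{P_{\epsilon_n}}$-orthogonal to every $\partial_{x_j}u_{\epsilon_n}$ with $\|\phi_n\|_{H^1_{P_{\epsilon_n}}}=1$. The uniform ellipticity \eqref{uniform lower bound} gives $\|\phi_n\|_{H^1}\leq\gamma^{-1/2}$, so along a subsequence $\phi_n\rightharpoonup\phi_\infty$ weakly in $H^1$ and strongly in $L^2_{\textup{loc}}$ by Rellich. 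Testing $(P_{\epsilon_n}+1)\phi_n=\mathcal{N}^+_{u_{\epsilon_n}}\phi_n$ against an arbitrary $\psi\in C_c^\infty$, moving the differential operator onto the smooth $\psi$ (so that $P_{\epsilon_n}\psi\to -\Delta\psi$ in $L^2$) and invoking $u_{\epsilon_n}\to Q_0$ in $H^\ell$ for every $\ell$ by Proposition \ref{upgraded convergence}, I would pass to the limit to obtain $\mathcal{L}_0^+\phi_\infty=0$. Non-degeneracy of $Q_0$ yields $\phi_\infty=\sum_j c_j \partial_{x_j}Q_0$. The same limiting procedure applied to $0=\langle\phi_n,\partial_{x_k}u_{\epsilon_n}\rangle_{H^1_{P_{\epsilon_n}}}=\langle\phi_n,\mathcal{N}^+_{u_{\epsilon_n}}\partial_{x_k}u_{\epsilon_n}\rangle_{L^2}$ (using that $\partial_{x_k}u_{\epsilon_n}\in\textup{Ker}\,\mathcal{L}_{\epsilon_n}^+$) forces $\sum_j c_j\langle\partial_{x_j}Q_0,\partial_{x_k}Q_0\rangle_{H^1}=0$ for every $k$, and positive definiteness of the Gram matrix of $\{\partial_{x_j}Q_0\}$ then gives $\phi_\infty=0$.

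The contradiction then follows by rewriting the normalization as
\[
1=\|\phi_n\|^2_{H^1_{P_{\epsilon_n}}}=\langle\mathcal{N}^+_{u_{\epsilon_n}}\phi_n,\phi_n\rangle_{L^2}
\]
and showing the right-hand side vanishes in the limit. In the NLS case it equals $(2k+1)\int u_{\epsilon_n}^{2k}\phi_n^2\,dx$, which I would split into $|x|<R$ (handled by the uniform $L^\infty$ bound on $u_{\epsilon_n}$ from Proposition \ref{upgraded convergence} combined with $\phi_n\to 0$ in $L^2_{\textup{loc}}$) and $|x|\geq R$ (handled by H\"older against the tail control $\|u_{\epsilon_n}\|_{L^p(|x|\geq R)}\to 0$ as $R\to\infty$ uniformly in $n$, which follows from $u_{\epsilon_n}\to Q_0$ in $L^p$ together with decay of $Q_0$). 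The NLH case is structurally identical, substituting the Hardy--Littlewood--Sobolev estimate as in \eqref{nonlinear bound 2} and controlling the tails of $u_{\epsilon_n}\phi_n$ in $L^{6/5}$. The $\mathcal{L}_\epsilon^-$ case runs in exactly the same way, with $u_{\epsilon_n}$ replacing the $\partial_{x_j}u_{\epsilon_n}$ and $\textup{span}\{Q_0\}$ replacing $\textup{span}\{\partial_{x_j}Q_0\}$.

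The main obstacle is that one cannot appeal to any operator-norm convergence of $P_{\epsilon_n}$ to $-\Delta$, since they are differential operators of different orders; the remedy, exploited throughout, is to use only the weak formulation against smooth test functions so that the high-order derivatives land harmlessly on $\psi$. A secondary subtlety is establishing the necessary tightness for the multiplicative/nonlocal operators $\mathcal{N}^\pm_{u_{\epsilon_n}}$ tested against the only-weakly-convergent $\phi_n$; this is secured uniformly in $\epsilon$ by the spatial decay of $u_{\epsilon_n}$ furnished by Proposition \ref{upgraded convergence}.
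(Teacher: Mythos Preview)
Your argument is correct, but it takes a genuinely different route from the paper. The paper does not work at the level of $\mathcal{L}_\epsilon^\pm$ at all; instead it exploits the factorization \eqref{LA relation} to show that the conjugated operators $\mathcal{A}_\epsilon^\pm$ converge to $\mathcal{A}_0^\pm$ in the \emph{operator norm} on $L^2$ (the smoothing factors $(1+P_\epsilon)^{-1/2}$ absorb the high-order derivatives, so the obstacle you flag---no norm convergence of $P_\epsilon$ to $-\Delta$---simply disappears after conjugation). From this it deduces norm convergence of the Riesz projections $\mathcal{P}_\epsilon\to\mathcal{P}_0$ onto the kernels, and then a short rank argument bounds $\dim\textup{Ker}(\textup{Id}-\mathcal{A}_\epsilon^+)\leq d$. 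Your weak-compactness/tightness contradiction avoids operator-norm convergence altogether and is perfectly valid, but it is longer and, more importantly, the paper's norm convergence $\|\mathcal{A}_\epsilon^\pm-\mathcal{A}_0^\pm\|_{L^2\to L^2}\to 0$ is reused verbatim in the proof of Proposition~\ref{prop: non-degeneracy estimates} and Lemma~\ref{radial lower bound}; so by sidestepping it here you would have to establish something equivalent later anyway.
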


\begin{proof}
Following the argument in the proof of \cite[Theorem 3]{L2}, we prove \eqref{eq: Non-degeneracy epsilon} only, because \eqref{eq: Non-degeneracy epsilon'} can be proved by the same way.

By the equation, it is easy to see that each $\partial_{x_j}\sqrt{1+P_\epsilon}u_\epsilon$ is contained in the kernel of $(\textup{Id}-\mathcal{A}_\epsilon^+)$. Therefore, it suffices to show that the dimension of $\textup{Ker}(\textup{Id}-\mathcal{A}_\epsilon^+)$ is $\leq d$. We recall that $\textup{Ker}(\textup{Id}-\mathcal{A}_\epsilon^+)=\textup{Im}(\mathcal{P}_\epsilon)$, where $\mathcal{P}_\epsilon$ is the projection operator given by
$$\mathcal{P}_\epsilon:=\frac{1}{2\pi i}\oint_{|z|=c} (\textup{Id}-\mathcal{A}_\epsilon^+-z\textup{Id})^{-1}dz$$
for some sufficiently small $c>0$. We observe that by Lemma \ref{multilinear estimate},
\begin{equation}\label{A convergence}
\begin{aligned}
\|\mathcal{A}_\epsilon^+-\mathcal{A}_0^+\|_{L^2\to L^2} &=\left\|\frac{1}{\sqrt{1+P_\epsilon}}\mathcal{N}_{u_\epsilon}^+\frac{1}{\sqrt{1+P_\epsilon}}-\frac{1}{\sqrt{1-\Delta}}\mathcal{N}_{Q_0}^+\frac{1}{\sqrt{1-\Delta}}\right\|_{L^2\to L^2}\\
&\leq\left\|\left(\frac{1}{\sqrt{1+P_\epsilon}}-\frac{1}{\sqrt{1-\Delta}}\right)\mathcal{N}_{u_\epsilon}^+\frac{1}{\sqrt{1+P_\epsilon}}\right\|_{L^2\to L^2}\\
&\quad+\left\|\frac{1}{\sqrt{1-\Delta}}(\mathcal{N}_{u_\epsilon}^+-\mathcal{N}_{Q_0}^+)\frac{1}{\sqrt{1+P_\epsilon}}\right\|_{L^2\to L^2}\\
&\quad+\left\|\frac{1}{\sqrt{1-\Delta}}\mathcal{N}_{Q_0}^+\left(\frac{1}{\sqrt{1+P_\epsilon}}-\frac{1}{\sqrt{1-\Delta}}\right)\right\|_{L^2\to L^2}\to 0,
\end{aligned}
\end{equation}
and consequently, $\|\mathcal{P}_\epsilon-\mathcal{P}_0\|_{L^2\to L^2}\to 0$ as $\epsilon\to0$. Suppose that $\textup{Rank}(\mathcal{P}_\epsilon)>\textup{Rank}(\mathcal{P}_0)$. Then, there exist $L^2$-orthonormal vectors $v_1,\cdots, v_{d+1}$ such that $\mathcal{P}_\epsilon v_j=v_j$. Hence, $\mathcal{P}_0v_1,\cdots, \mathcal{P}_0v_{d+1}$ are almost orthogonal, and they are linearly independent, which contradicts to the assumption. Therefore, we conclude that $\textup{Rank}(\mathcal{P}_\epsilon)\leq\textup{Rank}(\mathcal{P}_0)=d$.
\end{proof}

Using the non-degeneracy, we prove the inequality analogous to Proposition \ref{non-degeneracy estimate 0}.

\begin{prop}[Non-degeneracy estimates; general case]\label{prop: non-degeneracy estimates}
Let $\{u_\epsilon\}_{\epsilon>0}$ be a family of real-valued solutions to the higher-order equation \eqref{hNLS} (resp., \eqref{hNLH}) such that $u_\epsilon\to Q_0$ in $H^1$ as $\epsilon\to 0$. Then, there exist $\epsilon_0>0$ and $\beta>0$ such that if $0<\epsilon\leq\epsilon_0$, then
$$\|(\textup{Id}-\mathcal{A}_\epsilon^\pm) g\|_{L^2(\mathbb{R}^d;\mathbb{R})}\geq \beta\|g\|_{L^2(\mathbb{R}^d;\mathbb{R})}$$
for all $g\in (\textup{Ker}(\textup{Id}-\mathcal{A}_\epsilon^\pm))^\perp\subset L^2(\mathbb{R}^d;\mathbb{R})$, which is equivalent to
$$\|\mathcal{L}_\epsilon^\pm g\|_{H_{P_\epsilon}^{-1}(\mathbb{R}^d;\mathbb{R})}\geq \beta\|g\|_{H_{P_\epsilon}^1(\mathbb{R}^d;\mathbb{R})}$$
for all $g\in(\textup{Ker}\mathcal{L}_\epsilon^\pm)^\perp\subset H_{P_\epsilon}^1(\mathbb{R}^d;\mathbb{R})$.
\end{prop}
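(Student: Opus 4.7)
The plan is to establish the $L^2$ bound for $\textup{Id}-\mathcal{A}_\epsilon^\pm$ by a compactness-and-contradiction argument, and then to deduce the equivalent $\mathcal{L}_\epsilon^\pm$ bound from the symmetric factorization \eqref{LA relation} together with the fact that $\sqrt{1+P_\epsilon}$ is an isometric isomorphism $H_{P_\epsilon}^1\to L^2$ and $L^2\to H_{P_\epsilon}^{-1}$, with constants controlled uniformly in $\epsilon$ by \eqref{uniform lower bound}. The three inputs that will drive the argument are already in place: (i) Proposition \ref{non-degeneracy estimate 0} gives the base case $\epsilon=0$ with a uniform constant $\beta_0$; (ii) the estimate \eqref{A convergence} gives operator-norm convergence $\mathcal{A}_\epsilon^\pm\to\mathcal{A}_0^\pm$ on $L^2$; and (iii) the proof of Proposition \ref{Non-degeneracy epsilon} produces Riesz projections $\mathcal{P}_\epsilon$ onto $\textup{Ker}(\textup{Id}-\mathcal{A}_\epsilon^\pm)$ with $\|\mathcal{P}_\epsilon-\mathcal{P}_0\|_{L^2\to L^2}\to 0$. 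Since $\mathcal{N}_{u_\epsilon}^\pm$ is self-adjoint on $L^2$ (for the Hartree case via Fubini on the symmetric kernel $|x-y|^{-1}$), each $\mathcal{A}_\epsilon^\pm$ is self-adjoint and these $\mathcal{P}_\epsilon$ coincide with the orthogonal projections onto the kernels.

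For the contradiction, suppose no such $\beta>0$ exists. Then I can choose sequences $\epsilon_n\downarrow 0$ and $g_n\in(\textup{Ker}(\textup{Id}-\mathcal{A}_{\epsilon_n}^\pm))^\perp\subset L^2(\mathbb{R}^d;\mathbb{R})$ with $\|g_n\|_{L^2}=1$ and $(\textup{Id}-\mathcal{A}_{\epsilon_n}^\pm)g_n\to 0$ in $L^2$. Writing $g_n=\mathcal{A}_{\epsilon_n}^\pm g_n+o_{L^2}(1)$ and applying \eqref{A convergence} to the bounded sequence $\{g_n\}$, I obtain $g_n=\mathcal{A}_0^\pm g_n+o_{L^2}(1)$. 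Compactness of $\mathcal{A}_0^\pm$ on $L^2$, which is proved via Hilbert--Schmidt integral kernels in the proof of Proposition \ref{non-degeneracy estimate 0}, then upgrades a weakly convergent subsequence of $\{g_n\}$ to a strongly convergent one, producing $g_n\to g_\infty$ in $L^2$ with $\|g_\infty\|_{L^2}=1$ and $g_\infty\in\textup{Ker}(\textup{Id}-\mathcal{A}_0^\pm)$.

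The orthogonality information supplies the other half of the contradiction: since $\mathcal{P}_{\epsilon_n}$ is orthogonal and $g_n\perp\textup{Ker}(\textup{Id}-\mathcal{A}_{\epsilon_n}^\pm)$, we have $\mathcal{P}_{\epsilon_n}g_n=0$. Passing to the limit with $\|\mathcal{P}_{\epsilon_n}-\mathcal{P}_0\|_{L^2\to L^2}\to 0$ and the strong convergence $g_n\to g_\infty$, I get $\mathcal{P}_0 g_\infty=0$, i.e.\ $g_\infty\in(\textup{Ker}(\textup{Id}-\mathcal{A}_0^\pm))^\perp$. Combined with the preceding paragraph this forces $g_\infty=0$, contradicting $\|g_\infty\|_{L^2}=1$. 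Once the $L^2$ inequality is in hand, substituting $g=\sqrt{1+P_\epsilon}\,\tilde g$ in \eqref{LA relation} turns it into the stated $H_{P_\epsilon}^1\!\to\!H_{P_\epsilon}^{-1}$ bound with the same constant $\beta$, since the substitution is an isometry that takes $\textup{Ker}\mathcal{L}_\epsilon^\pm$ to $\textup{Ker}(\textup{Id}-\mathcal{A}_\epsilon^\pm)$ and preserves orthogonality.

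I expect the delicate point to be ensuring that the three ingredients---operator-norm convergence of $\mathcal{A}_\epsilon^\pm$, compactness of $\mathcal{A}_0^\pm$, and norm-convergence of the projections $\mathcal{P}_\epsilon$---all live on a single fixed Hilbert space independent of $\epsilon$. This is precisely the role of the sandwich form $\textup{Id}-\mathcal{A}_\epsilon^\pm$: all $\epsilon$-dependence is absorbed into a compact perturbation of a fixed compact operator on the fixed space $L^2(\mathbb{R}^d;\mathbb{R})$, so that the standard Fredholm-alternative/perturbation machinery applies uniformly in $\epsilon$, and the propagation of the spectral gap from $\epsilon=0$ to small $\epsilon>0$ becomes automatic.
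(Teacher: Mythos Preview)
Your argument is correct, but it proceeds along a different route than the paper. The paper gives a direct quantitative estimate: for $g\in(\textup{Ker}(\textup{Id}-\mathcal{A}_\epsilon^+))^\perp$ it writes
\[
(\textup{Id}-\mathcal{A}_\epsilon^+)g=(\textup{Id}-\mathcal{A}_0^+)g_0^\perp+(\textup{Id}-\mathcal{A}_0^+)(g-g_0^\perp)+(\mathcal{A}_0^+-\mathcal{A}_\epsilon^+)g,
\]
applies Proposition~\ref{non-degeneracy estimate 0} to the first term, and controls the two error terms by showing directly that the orthogonal projections onto the kernels satisfy $\|g_\epsilon^\perp-g_0^\perp\|_{L^2}=o_\epsilon(1)\|g\|_{L^2}$ (computed explicitly from the convergence $\partial_{x_j}\sqrt{1+P_\epsilon}\,u_\epsilon\to\partial_{x_j}\sqrt{1-\Delta}\,Q_0$) together with \eqref{A convergence}. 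This yields the explicit constant $\beta=\beta_0/4$. Your compactness--contradiction argument instead packages the same three ingredients---Proposition~\ref{non-degeneracy estimate 0}, \eqref{A convergence}, and the norm convergence of the Riesz/orthogonal projections $\mathcal{P}_\epsilon\to\mathcal{P}_0$---into a soft spectral perturbation scheme: extract a strong $L^2$ limit via compactness of $\mathcal{A}_0^\pm$, place it simultaneously in the kernel and in its orthogonal complement, and contradict. The trade-off is that the paper's approach is constructive and gives a concrete $\beta$, while yours is shorter and more conceptual but nonquantitative; your observation that self-adjointness of $\mathcal{A}_\epsilon^\pm$ makes the Riesz projections orthogonal is what lets you pass the constraint $\mathcal{P}_{\epsilon_n}g_n=0$ to the limit cleanly. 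The final equivalence step via the isometry $\sqrt{1+P_\epsilon}:H_{P_\epsilon}^1\to L^2$ matches the paper's use of \eqref{LA relation}.
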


\begin{proof}
We show the proposition only for $\mathcal{A}_\epsilon^+$, since the other inequality can be proved exactly by the same way.

Let $\beta=\frac{\beta_0}{4}>0$, where $\beta_0$ is given in Proposition \ref{non-degeneracy estimate 0}. For $g\in L^2(\mathbb{R}^d;\mathbb{R})$ and $\epsilon\geq0$, we denote by $g_\epsilon^\perp$ the orthogonal projection of $g$ to $(\textup{Ker}(\textup{Id}-\mathcal{A}_\epsilon^+))^\perp\subset L^2(\mathbb{R}^d;\mathbb{R})$, precisely
$$g_\epsilon^\perp:=g-\sum_{j=1}^d\langle g, e_{j;\epsilon}\rangle_{L^2}e_{j;\epsilon},$$
where $e_{j;\epsilon}:=\frac{\partial_{x_j}\sqrt{1+P_\epsilon}u_\epsilon}{\|\partial_{x_j}\sqrt{1+P_\epsilon}u_\epsilon\|_{L^2}^2}$. We fix $g\in (\textup{Ker}(\textup{Id}-\mathcal{A}_\epsilon^+))^\perp$. Then, we decompose
\begin{align*}
(\textup{Id}-\mathcal{A}_\epsilon^+)g&=(\textup{Id}-\mathcal{A}_0^+)g+(\mathcal{A}_0^+-\mathcal{A}_\epsilon^+)g\\
&=(\textup{Id}-\mathcal{A}_0^+)g_0^\perp+(\textup{Id}-\mathcal{A}_0^+)(g-g_0^\perp)+(\mathcal{A}_0^+-\mathcal{A}_\epsilon^+)g.
\end{align*}
By the triangle inequalities and Proposition \ref{non-degeneracy estimate 0}, we get
\begin{equation}\label{proof: non-degeneracy estimates}
\begin{aligned}
\|(\textup{Id}-\mathcal{A}_\epsilon^+)g\|_{L^2}&\geq\|(\textup{Id}-\mathcal{A}_0^+)g_0^\perp\|_{L^2}-\|(\textup{Id}-\mathcal{A}_0^+)(g-g_0^\perp)\|_{L^2}-\|(\mathcal{A}_0^+-\mathcal{A}_\epsilon^+)g\|_{L^2}\\
&\geq 4\beta\|g_0^\perp\|_{L^2}-\|g-g_0^\perp\|_{L^2}-\|\mathcal{A}_0^+(g-g_0^\perp)\|_{L^2}-\|(\mathcal{A}_0^+-\mathcal{A}_\epsilon^+)g\|_{L^2}\\
&\geq 4\beta\|g\|_{L^2}-(4\beta+1+\|\mathcal{A}_0^+\|_{L^2\to L^2})\|g_\epsilon^\perp-g_0^\perp\|_{L^2}\quad\textup{(by $g_\epsilon^\perp=g$)}\\
&\quad-\|\mathcal{A}_0^+-\mathcal{A}_\epsilon^+\|_{L^2\to L^2}\|g\|_{L^2}.
\end{aligned}
\end{equation}
On the other hand, we have
\begin{align*}
\|g_\epsilon^\perp-g_0^\perp\|_{L^2}&\leq\sum_{j=1}^d\left\|\langle g,e_{j;\epsilon}\rangle_{L^2}e_{j;\epsilon}-\langle g,e_{j;0}\rangle_{L^2}e_{j;0}\right\|_{L^2}\\
&\leq\sum_{j=1}^d|\langle g,e_{j;\epsilon}-e_{j;0}\rangle_{L^2}|+|\langle g,e_{j;0}\rangle_{L^2}|\left\|e_{j;\epsilon}-e_{j;0}\right\|_{L^2}\\
&\leq2\|g\|_{L^2}\sum_{j=1}^d\left\|e_{j;\epsilon}-e_{j;0}\right\|_{L^2}\leq o_\epsilon(1)\|g\|_{L^2},
\end{align*}
because by Proposition \ref{upgraded convergence}, 
\begin{align*}
&\left\|\partial_{x_j}\sqrt{1+P_\epsilon}u_\epsilon-\partial_{x_j}\sqrt{1-\Delta}u_0\right\|_{L^2}\\
&\leq\left\|(\sqrt{1+P_\epsilon}-\sqrt{1-\Delta})\partial_{x_j}u_\epsilon\right\|_{L^2}+\left\|\partial_{x_j}\sqrt{1-\Delta}(u_\epsilon-u_0)\right\|_{L^2}\to 0
\end{align*}
as $\epsilon\to 0$ and it implies $\|e_{j;\epsilon}-e_{j;0}\|_{L^2}\to 0$. Moreover, by \eqref{A convergence}, $\|\mathcal{A}_0^+-\mathcal{A}_\epsilon^+\|_{L^2\to L^2}\to 0$ as $\epsilon\to0$. Inserting these to \eqref{proof: non-degeneracy estimates}, we prove the proposition.
\end{proof}

By a little modification, we can also show the following inequality.

\begin{lem}\label{radial lower bound}
There exists $\epsilon_0>0$ such that if $0<\epsilon\leq\epsilon_0$, then
$$\textup{Id}-\frac{1}{\sqrt{1+P_\epsilon}}\mathcal{N}_{Q_0}^+\frac{1}{\sqrt{1+P_\epsilon}}$$
is invertible on $L_{rad}^2(\mathbb{R}^d;\mathbb{R})$. Moreover, its inverse is uniformly bounded,
$$\left\|\left(\textup{Id}-\frac{1}{\sqrt{1+P_\epsilon}}\mathcal{N}_{Q_0}^+\frac{1}{\sqrt{1+P_\epsilon}}\right)^{-1}\right\|_{L_{rad}^2(\mathbb{R}^d;\mathbb{R})\to L_{rad}^2(\mathbb{R}^d;\mathbb{R})}\leq\frac{2}{\beta_0},$$
where $\beta_0>0$ is given by Proposition \ref{non-degeneracy estimate 0}.
\end{lem}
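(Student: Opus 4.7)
The plan is to combine the base-case lower bound from Proposition \ref{non-degeneracy estimate 0} with two observations: on the radial subspace, the kernel obstruction at $\epsilon=0$ is automatically absent, and the perturbation in $\epsilon$ is small in operator norm. Write $\tilde{\mathcal{A}}_\epsilon^+ := \frac{1}{\sqrt{1+P_\epsilon}}\mathcal{N}_{Q_0}^+\frac{1}{\sqrt{1+P_\epsilon}}$. Because $Q_0$ is radial, $\mathcal{N}_{Q_0}^+$ preserves $L_{rad}^2$, and because $P_\epsilon$ is a Fourier multiplier with a radial symbol (an implicit standing assumption compatible with all motivating examples in the introduction), so does $\sqrt{1+P_\epsilon}$; hence $\tilde{\mathcal{A}}_\epsilon^+$ restricts to a bounded operator on $L_{rad}^2(\mathbb{R}^d;\mathbb{R})$.

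The first key observation is that
\begin{equation*}
\textup{Ker}(\textup{Id}-\mathcal{A}_0^+) = \textup{span}\bigl\{\partial_{x_1}\sqrt{1-\Delta}\,Q_0,\ldots,\partial_{x_d}\sqrt{1-\Delta}\,Q_0\bigr\}
\end{equation*}
consists of functions that are odd under the reflection $x_j \mapsto -x_j$, so every radial $g$ automatically lies in $(\textup{Ker}(\textup{Id}-\mathcal{A}_0^+))^\perp$. Proposition \ref{non-degeneracy estimate 0} therefore delivers the uniform inequality $\|(\textup{Id}-\mathcal{A}_0^+)g\|_{L^2}\geq\beta_0\|g\|_{L^2}$ for every $g\in L_{rad}^2(\mathbb{R}^d;\mathbb{R})$, with no further projection needed.

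The remaining step is operator-norm convergence $\|\tilde{\mathcal{A}}_\epsilon^+-\mathcal{A}_0^+\|_{L^2\to L^2}\to 0$ as $\epsilon\to 0$. This is essentially the $Q_0$-version of \eqref{A convergence}: I would split the difference as
\begin{equation*}
\left(\tfrac{1}{\sqrt{1+P_\epsilon}}-\tfrac{1}{\sqrt{1-\Delta}}\right)\mathcal{N}_{Q_0}^+\tfrac{1}{\sqrt{1+P_\epsilon}}+\tfrac{1}{\sqrt{1-\Delta}}\mathcal{N}_{Q_0}^+\left(\tfrac{1}{\sqrt{1+P_\epsilon}}-\tfrac{1}{\sqrt{1-\Delta}}\right),
\end{equation*}
invoke $L^2\to L^2$ boundedness of $\mathcal{N}_{Q_0}^+$ (from the smoothness and decay of $Q_0$, using Hardy-Littlewood-Sobolev in the Hartree case), and use the resolvent-difference convergence coming from the uniform ellipticity \eqref{uniform lower bound}. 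A triangle inequality then gives, for $\epsilon$ small and $g\in L_{rad}^2$,
\begin{equation*}
\|(\textup{Id}-\tilde{\mathcal{A}}_\epsilon^+)g\|_{L^2}\geq \beta_0\|g\|_{L^2}-o_\epsilon(1)\|g\|_{L^2}\geq \tfrac{\beta_0}{2}\|g\|_{L^2},
\end{equation*}
which furnishes injectivity on $L_{rad}^2$ together with the operator-norm bound $2/\beta_0$ on the inverse. Surjectivity onto $L_{rad}^2$ then follows from the Fredholm alternative, since $\tilde{\mathcal{A}}_\epsilon^+$ is compact on $L^2$ by the Hilbert-Schmidt kernel argument used in the proof of Proposition \ref{non-degeneracy estimate 0}. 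The main obstacle is simply to organize the perturbation carefully enough that the error really is $o_\epsilon(1)$ uniformly in the $L^2$-operator norm; everything else reduces to the base case and to Fredholm theory.
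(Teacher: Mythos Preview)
Your proposal is correct and follows essentially the same route as the paper: both argue that the kernel of $\textup{Id}-\mathcal{A}_0^+$ is trivial on $L_{rad}^2$ (since $\partial_{x_j}\sqrt{1-\Delta}\,Q_0$ is not radial), invoke Proposition~\ref{non-degeneracy estimate 0} for the base-case lower bound, and then perturb via the same two-term splitting of $\tilde{\mathcal{A}}_\epsilon^+-\mathcal{A}_0^+$ to conclude. Your write-up is slightly more explicit in two places the paper leaves implicit---the Fredholm step for surjectivity and the need for $P_\epsilon$ to preserve radial functions---but the underlying argument is the same.
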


\begin{proof}
By Proposition \ref{non-degeneracy estimate 0}, the operator $(\textup{Id}-\frac{1}{\sqrt{1-\Delta}}\mathcal{N}_{Q_0}^+\frac{1}{\sqrt{1-\Delta}})$ is invertible, because its kernel in $L_{rad}^2$ is empty ($\partial_{x_j}Q_0$'s are not radially symmetric). On the other hand, repeating the proof of  \eqref{A convergence}, one can show that the difference
\begin{align*}
&\frac{1}{\sqrt{1+P_\epsilon}}\mathcal{N}_{Q_0}^+\frac{1}{\sqrt{1+P_\epsilon}}-\frac{1}{\sqrt{1-\Delta}}\mathcal{N}_{Q_0}^+\frac{1}{\sqrt{1-\Delta}}\\
&=\left(\frac{1}{\sqrt{1+P_\epsilon}}-\frac{1}{\sqrt{1-\Delta}}\right)\mathcal{N}_{Q_0}^+\frac{1}{\sqrt{1+P_\epsilon}}+\frac{1}{\sqrt{1-\Delta}}\mathcal{N}_{Q_0}^+\left(\frac{1}{\sqrt{1+P_\epsilon}}-\frac{1}{\sqrt{1-\Delta}}\right)
\end{align*}
can be arbitrarily small in the operator norm on $L_{rad}^2(\mathbb{R}^d;\mathbb{R})$, provided that $\epsilon>0$ is small enough. Therefore, we conclude that if $0<\epsilon\leq\epsilon_0$, then $(\textup{Id}-\frac{1}{\sqrt{1+P_\epsilon}}\mathcal{N}_{Q_0}^+\frac{1}{\sqrt{1+P_\epsilon}})$ is invertible, and its inverse is uniformly bounded. 
\end{proof}

\section{Construction of a solution by contraction}\label{sec: construction}

In this section, by the contraction mapping argument in \cite{CHS2} which relies on the non-degeneracy estimates in the previous section, we construct a radially symmetric real-valued solution $u_\epsilon$ to the higher-order equation, with small $\epsilon>0$, that converges to the ground state $Q_0$ as $\epsilon\to0$, where $Q_0$ is the unique radially symmetric real-valued ground state for the second-order equation.

\begin{prop}[Construction of a solution by contraction]\label{contraction}
Suppose that \eqref{uniform lower bound} (as well as \eqref{subcritical} for NLS) holds. Then, there exists $\epsilon_0>0$ such that a sequence of radially symmetric real-valued solutions $\{u_\epsilon\}_{0<\epsilon\leq\epsilon_0}$ to the higher-order NLS \eqref{hNLS} (resp., the higher-order NLH \eqref{hNLH}) exists, with the convergence
$$\lim_{\epsilon\to0}\|u_\epsilon-Q_0\|_{H_{P_\epsilon}^1}=0.$$
\end{prop}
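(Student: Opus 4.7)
I set $u_\epsilon = Q_0 + h_\epsilon$ and use $(-\Delta + 1)Q_0 = \mathcal{N}'(Q_0)$ to rewrite the higher-order equation $(P_\epsilon+1)u_\epsilon = \mathcal{N}'(u_\epsilon)$ as
$$M_\epsilon h_\epsilon \;=\; F_\epsilon + R(h_\epsilon),$$
where $M_\epsilon := P_\epsilon+1-\mathcal{N}_{Q_0}^+$ is the operator obtained by linearizing at $Q_0$, the source term is $F_\epsilon := -(P_\epsilon+\Delta)Q_0 = -\sum_{|\alpha|=3}^{J}c_\alpha\epsilon^{|\alpha|-2}(i\nabla)^\alpha Q_0$, and the formally quadratic remainder is $R(h) := \mathcal{N}'(Q_0+h)-\mathcal{N}'(Q_0)-\mathcal{N}_{Q_0}^+(h)$. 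The plan is to produce $h_\epsilon$ by Banach contraction on a small ball of radial real-valued functions in $H^1_{P_\epsilon}$.

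The uniform invertibility of $M_\epsilon$ on the radial real-valued subspace is the key analytic input. Using the symmetric factorization
$$M_\epsilon \;=\; \sqrt{1+P_\epsilon}\,\Bigl(\mathrm{Id} - \tfrac{1}{\sqrt{1+P_\epsilon}}\,\mathcal{N}_{Q_0}^+\,\tfrac{1}{\sqrt{1+P_\epsilon}}\Bigr)\,\sqrt{1+P_\epsilon}$$
together with Lemma \ref{radial lower bound}, for every sufficiently small $\epsilon>0$ the operator $M_\epsilon$ is boundedly invertible from $H^{-1}_{P_\epsilon,\mathrm{rad}}(\mathbb{R}^d;\mathbb{R})$ to $H^1_{P_\epsilon,\mathrm{rad}}(\mathbb{R}^d;\mathbb{R})$ with norm $\leq 2/\beta_0$ uniformly in $\epsilon$. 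Since $Q_0$ is smooth and rapidly decaying and $P_\epsilon+\Delta$ starts at order $\epsilon$, we have $\|F_\epsilon\|_{L^2}\lesssim \epsilon$, hence $\|F_\epsilon\|_{H^{-1}_{P_\epsilon}}\lesssim \epsilon$.

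Define the map $T(h) := M_\epsilon^{-1}\bigl(F_\epsilon + R(h)\bigr)$ on the closed ball
$$B_r := \bigl\{\,h \in H^1_{P_\epsilon}(\mathbb{R}^d;\mathbb{R}) : h \text{ radial},\ \|h\|_{H^1_{P_\epsilon}} \leq r\,\bigr\}, \qquad r = C\epsilon.$$
Since $P_\epsilon$, $\mathcal{N}_{Q_0}^+$, and $\mathcal{N}'$ all preserve radial symmetry and real-valuedness, so does $T$. Using a Taylor expansion, the Sobolev embeddings guaranteed by \eqref{subcritical}, and (for NLH) the Hardy--Littlewood--Sobolev inequality as in \eqref{nonlinear bound 1}--\eqref{nonlinear bound 2}, the remainder satisfies
$$\|R(h)\|_{L^2} \;\lesssim\; \bigl(1+\|h\|_{H^1}\bigr)^{p-2}\|h\|_{H^1}^{2}, \qquad \|R(h_1)-R(h_2)\|_{L^2} \;\lesssim\; \bigl(\|h_1\|_{H^1}+\|h_2\|_{H^1}\bigr)\|h_1-h_2\|_{H^1}$$
for $h,h_1,h_2$ in a fixed bounded set. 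Combining these with \eqref{uniform lower bound} and the uniform bound on $M_\epsilon^{-1}$ yields
$$\|T(h)\|_{H^1_{P_\epsilon}} \;\leq\; C_1(\epsilon + r^2), \qquad \|T(h_1)-T(h_2)\|_{H^1_{P_\epsilon}} \;\leq\; C_2\, r\,\|h_1-h_2\|_{H^1_{P_\epsilon}}.$$
Choosing $C$ large enough and $\epsilon_0$ small enough, $T$ maps $B_{C\epsilon}$ into itself and is a contraction there; the unique fixed point $h_\epsilon$ produces the desired solution $u_\epsilon := Q_0 + h_\epsilon$, and $\|u_\epsilon - Q_0\|_{H^1_{P_\epsilon}} = \|h_\epsilon\|_{H^1_{P_\epsilon}} \lesssim \epsilon \to 0$.

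\textbf{Main obstacle.} The decisive point is the uniform invertibility of the linearized operator on the radial real-valued subspace, which is precisely Lemma \ref{radial lower bound}. It is crucial here to linearize at $Q_0$ (whose radial component of $\mathrm{Ker}(\mathrm{Id}-\mathcal{A}_0^+)$ is trivial, since $\partial_{x_j}\sqrt{1-\Delta}\,Q_0$ are not radial) rather than at $u_\epsilon$, because $u_\epsilon$ has not yet been constructed and the more refined Proposition \ref{prop: non-degeneracy estimates} is unavailable at this stage. A secondary technical point is that one must check $T$ preserves radial symmetry and real-valuedness; this reduces to the corresponding symmetries of $P_\epsilon$ and the nonlinearity, which are automatic under the standing assumptions of the paper.
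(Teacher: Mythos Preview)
Your proof is correct and follows essentially the same route as the paper's: write $u_\epsilon=Q_0+h_\epsilon$, linearize at $Q_0$ to obtain the operator $P_\epsilon+1-\mathcal{N}_{Q_0}^+$, invert it on the radial real-valued subspace via Lemma~\ref{radial lower bound}, and run a Banach fixed-point argument with the $o(1)$ source term $(-\Delta-P_\epsilon)Q_0$ and the super-linear remainder $\mathcal{N}'(Q_0+h)-\mathcal{N}'(Q_0)-\mathcal{N}_{Q_0}^+(h)$. The only cosmetic differences are notation ($h_\epsilon$, $M_\epsilon$, $T$ in place of the paper's $r_\epsilon$, unnamed operator, $\Phi$) and your choice of ball radius $C\epsilon$ versus the paper's $\delta_\epsilon:=\tfrac{4}{\beta_0}\|(-\Delta-P_\epsilon)Q_0\|_{H_{P_\epsilon}^{-1}}$, which are equivalent since $\delta_\epsilon\lesssim\epsilon$.
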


\begin{proof}
\underline{\textit{Step 1. Reformulation of the equation.}} Let $\epsilon>0$ be sufficiently small. Suppose that $u_\epsilon$ is a radially symmetric real-valued solution to the higher-order equation. Then, the difference
$$r_\epsilon:=u_\epsilon-Q_0$$
solves the equation
\begin{align*}
(P_\epsilon+1) r_\epsilon&=(P_\epsilon+1) u_\epsilon-(P_\epsilon+1) Q_0\\
&=(-\Delta-P_\epsilon)Q_0+(P_\epsilon+1) u_\epsilon-(-\Delta+1)Q_0\\
&=(-\Delta-P_\epsilon)Q_0+\mathcal{N}'(u_\epsilon)-\mathcal{N}'(Q_0)\\
&=(-\Delta-P_\epsilon)Q_0+\mathcal{N}'(Q_0+r_\epsilon)-\mathcal{N}'(Q_0).
\end{align*}
Moving the linear terms with respect to $r_\epsilon$ on the right hand side to the left, we write
$$(P_\epsilon+1-\mathcal{N}_{Q_0}^+) r_\epsilon=(-\Delta-P_\epsilon)Q_0+\mathcal{N}'(Q_0+r_\epsilon)-\mathcal{N}'(Q_0)-\mathcal{N}_{Q_0}^+(r_\epsilon)$$
(see \eqref{N+} for the definition of $\mathcal{N}_{Q_0}^+$). Then, inverting the operator
$$(P_\epsilon+1-\mathcal{N}_{Q_0}^+)=\sqrt{1+P_\epsilon}\left(\textup{Id}-\frac{1}{\sqrt{1+P_\epsilon}}\mathcal{N}_{Q_0}^+\frac{1}{\sqrt{1+P_\epsilon}}\right)\sqrt{1+P_\epsilon}$$
by Lemma \ref{radial lower bound}, we reformulate the higher-order equation as
\begin{align*}
r_\epsilon&=(P_\epsilon+1-\mathcal{N}_{Q_0}^+)^{-1}\Big\{(-\Delta-P_\epsilon)Q_0+\mathcal{N}'(Q_0+r_\epsilon)-\mathcal{N}'(Q_0)-\mathcal{N}_{Q_0}^+(r_\epsilon)\Big\}\\
&=:\Phi(r_\epsilon).
\end{align*}
\underline{\textit{Step 2. Construction of a solution.}} We set
$$\delta_\epsilon:=\frac{4}{\beta_0}\left\|(-\Delta-P_\epsilon)u_0\right\|_{H_{P_\epsilon}^{-1}},$$
where $\beta_0>0$ is the constant given in Lemma \ref{radial lower bound}. Then, by \eqref{uniform lower bound}, we have
$$\delta_\epsilon\leq\frac{4}{\beta_0}\sum_{|\alpha|=3}^J|c_\alpha| \epsilon^{|\alpha|-2}\|\nabla^\alpha Q_0\|_{H_{P_\epsilon}^{-1}}\leq \frac{4}{\beta_0\gamma}\sum_{|\alpha|=3}^J|c_\alpha| \epsilon^{|\alpha|-2}\|\nabla^\alpha Q_0\|_{H^{-2}}\to 0$$
as $\epsilon\to 0$. Let $\epsilon_0$ be a sufficiently small number to be chosen so that all of the following estimates hold. Suppose that $0<\epsilon\leq\epsilon_0$ (and thus $\delta_\epsilon>0$ is also small enough). If $\|r\|_{H_{P_\epsilon}^1}, \|\tilde{r}\|_{H_{P_\epsilon}^1}\leq\delta_\epsilon$, then by Lemma \ref{radial lower bound} and Lemma \ref{multilinear estimate}, 
\begin{align*}
\|\Phi(r)\|_{H_{P_\epsilon}^1}&=\Bigg\|\left(\textup{Id}-\frac{1}{\sqrt{1+P_\epsilon}}\mathcal{N}_{Q_0}^+\frac{1}{\sqrt{1+P_\epsilon}}\right)^{-1}\frac{1}{\sqrt{1+P_\epsilon}}\\
&\quad\quad\Big\{(-\Delta-P_\epsilon)Q_0+\mathcal{N}'(Q_0+r)-\mathcal{N}'(Q_0)-\mathcal{N}_{Q_0}^+(r)\Big\}\Bigg\|_{L^2}\\
&\leq \frac{2}{\beta_0}\left\|(-\Delta-P_\epsilon)Q_0\right\|_{H_{P_\epsilon}^{-1}}+\frac{2}{\beta_0}\left\|\mathcal{N}'(Q_0+r)-\mathcal{N}'(Q_0)-\mathcal{N}_{Q_0}^+(r)\right\|_{H_{P_\epsilon}^{-1}}\\
&\leq \frac{2}{\beta_0}\left\|(-\Delta-P_\epsilon)u_0\right\|_{H_{P_\epsilon}^{-1}}+\frac{1}{2}\|r\|_{H_{P_\epsilon}^1}\leq\delta_\epsilon
\end{align*}
and similarly, 
\begin{align*}
\|\Phi(r)-\Phi(\tilde{r})\|_{H_{P_\epsilon}^1}&\leq \frac{2}{\beta_0}\Big\|\left(\mathcal{N}'(Q_0+r)-\mathcal{N}'(Q_0)-\mathcal{N}_{Q_0}^+(r)\right)\\
&\quad\quad\quad-\left(\mathcal{N}'(Q_0+\tilde{r})-\mathcal{N}'(Q_0)-\mathcal{N}_{Q_0}^+(\tilde{r})\right)\Big\|_{H_{P_\epsilon}^{-1}}\\
&\leq \frac{1}{2}\|r-\tilde{r}\|_{H_{P_\epsilon}^1}.
\end{align*}
Therefore, we conclude that $\Phi$ is contractive, and it has a unique fixed point, denoted by $r_\epsilon$, on the ball of radius $\delta_\epsilon$ centered at $0$ in $H_{P_\epsilon}^1$. As a consequence, $u_\epsilon=Q_0+r_\epsilon$ solves the higher-order equation, and $\|u_\epsilon-Q_0\|_{H_{P_\epsilon}^1}=\|r_\epsilon\|_{H_{P_\epsilon}^1}\to 0$ as $\epsilon\to 0$. 
\end{proof}

\section{Local uniqueness}\label{sec: uniqueness}

The solution $u_\epsilon$, given by Proposition \ref{contraction}, is unique in a small ball of radially symmetric real-valued functions whose radius may depend on $\epsilon>0$. In this section, we upgrade this uniqueness to that in a small ball of all complex-valued functions whose radius is independent of $\epsilon>0$.

\begin{prop}[Uniqueness]\label{uniqueness}
Suppose that \eqref{uniform lower bound} (as well as \eqref{subcritical} for NLS) holds. Then, there exist $\delta>0$ and $\epsilon_0>0$ such that if $0<\epsilon\leq\epsilon_0$, then the solution $u_\epsilon$ to the higher-order equation \eqref{hNLS} (resp., \eqref{hNLH}), constructed in Proposition \ref{contraction}, is unique in a $\delta$-ball centered at $u_0$ in $H_{P_\epsilon}^1(\mathbb{R}^d;\mathbb{C})$ up to translation and phase shift.
\end{prop}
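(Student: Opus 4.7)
The plan is to argue by a modulation–linearization scheme: given any second solution $\tilde u_\epsilon$ in a small $H_{P_\epsilon}^1$-ball around $u_0 = Q_0$, I would choose translation and phase parameters so that its difference from $u_\epsilon$ is orthogonal to the kernel of the linearized operator at $u_\epsilon$, and then invoke the uniform non-degeneracy estimate of Proposition \ref{prop: non-degeneracy estimates} to force the difference to vanish. For the modulation step, I would seek $(\theta, x_0) \in \mathbb{R} \times \mathbb{R}^d$ near $(0,0)$ such that
\begin{equation*}
w := e^{-i\theta}\tilde u_\epsilon(\cdot + x_0) - u_\epsilon = w_1 + i w_2
\end{equation*}
satisfies $\langle w_1, \partial_{x_j} u_\epsilon \rangle_{H_{P_\epsilon}^1} = 0$ for $j = 1,\dots,d$ and $\langle w_2, u_\epsilon \rangle_{H_{P_\epsilon}^1} = 0$. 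These $d+1$ equations in the $d+1$ unknowns $(\theta, x_0)$ are solvable by the implicit function theorem, since at $(\theta,x_0) = (0,0)$ the Jacobian is essentially the Gram matrix of $\{u_\epsilon, \partial_{x_1}u_\epsilon, \dots, \partial_{x_d}u_\epsilon\}$ in $H_{P_\epsilon}^1$, a small perturbation (uniform in $\epsilon$, by Proposition \ref{upgraded convergence}) of the non-singular Gram matrix for $\{Q_0, \partial_{x_j}Q_0\}$; the latter is non-singular because $Q_0$ is radial while its partials are odd in the corresponding variable. Smallness of the parameters, hence $\|w\|_{H_{P_\epsilon}^1} \lesssim \delta$, follows from $\|\tilde u_\epsilon - u_\epsilon\|_{H_{P_\epsilon}^1} \lesssim \delta$.

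Next, subtracting the equations satisfied by $u_\epsilon$ and $u_\epsilon + w$ gives $(P_\epsilon + 1) w = \mathcal{N}'(u_\epsilon + w) - \mathcal{N}'(u_\epsilon)$. Since $u_\epsilon$ is real-valued, a first-order Taylor expansion of $\mathcal{N}'$ separates cleanly into its real and imaginary parts and yields
\begin{equation*}
\mathcal{L}_\epsilon^+ w_1 = R_1(w), \qquad \mathcal{L}_\epsilon^- w_2 = R_2(w),
\end{equation*}
with quadratic remainders satisfying $\|R_i(w)\|_{H_{P_\epsilon}^{-1}} \lesssim \|w\|_{H_{P_\epsilon}^1}^2$ (in the Hartree case this uses the Hardy--Littlewood--Sobolev inequality), with implicit constants uniform in $\epsilon$ thanks to the uniform $H^\ell$-bound on $u_\epsilon$ from Proposition \ref{upgraded convergence}. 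Since the modulation arranges $w_1 \in (\textup{Ker}\,\mathcal{L}_\epsilon^+)^\perp$ and $w_2 \in (\textup{Ker}\,\mathcal{L}_\epsilon^-)^\perp$ in $H_{P_\epsilon}^1$, using Proposition \ref{Non-degeneracy epsilon} to identify the kernels, Proposition \ref{prop: non-degeneracy estimates} gives
\begin{equation*}
\|w\|_{H_{P_\epsilon}^1} \leq \beta^{-1}\bigl(\|\mathcal{L}_\epsilon^+ w_1\|_{H_{P_\epsilon}^{-1}} + \|\mathcal{L}_\epsilon^- w_2\|_{H_{P_\epsilon}^{-1}}\bigr) \lesssim \|w\|_{H_{P_\epsilon}^1}^2.
\end{equation*}
For $\delta$ chosen small enough (uniformly in $\epsilon \leq \epsilon_0$), this self-improving inequality forces $w \equiv 0$, i.e., $\tilde u_\epsilon(x) = e^{i\theta} u_\epsilon(x - x_0)$.

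The main obstacle is the uniform-in-$\epsilon$ control at the modulation step. One needs both the Gram matrix above and the orthogonality conditions — which involve the $\epsilon$-dependent kernels of $\mathcal{L}_\epsilon^\pm$ rather than those of the limiting operators — to remain non-degenerate with bounds independent of $\epsilon$ as $\epsilon \to 0$. This reduces to the base case $\epsilon = 0$ via Proposition \ref{Non-degeneracy epsilon} together with the upgraded convergence $u_\epsilon \to Q_0$ in every $H^\ell$. A secondary technical point is verifying the quadratic remainder estimate uniformly in $\epsilon$ for the Hartree nonlinearity, where the nonlocal convolution structure requires combining Hardy--Littlewood--Sobolev with the uniform $H^\ell$-bounds to control the cross terms; this is routine once the non-degeneracy framework is in place.
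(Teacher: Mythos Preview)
Your proposal is correct and follows the same overall strategy as the paper: modulate by phase and translation to land in the orthogonal complement of $\textup{Ker}\,\mathcal{L}_\epsilon^\pm$, then invoke Proposition~\ref{prop: non-degeneracy estimates} on the quadratic remainder. The execution differs slightly. You select the $d+1$ modulation parameters simultaneously via the implicit function theorem, whereas the paper proceeds in two stages. First it fixes the phase by the explicit choice
\[
\tilde U_\epsilon=\frac{\overline{\langle\tilde u_\epsilon,u_\epsilon\rangle_{H^1_{P_\epsilon}}}}{|\langle\tilde u_\epsilon,u_\epsilon\rangle_{H^1_{P_\epsilon}}|}\,\tilde u_\epsilon,
\]
which makes $\textup{Im}\,\tilde U_\epsilon\perp u_\epsilon$ in $H^1_{P_\epsilon}$, and then uses the $\mathcal L_\epsilon^-$ estimate to force the imaginary part to vanish \emph{entirely}; only afterwards does it translate the now real-valued $\tilde u_\epsilon$ by minimizing $a\mapsto\|\tilde u_\epsilon(\cdot-a)-u_\epsilon\|_{H^1_{P_\epsilon}}$ and apply the $\mathcal L_\epsilon^+$ estimate. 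The paper's sequential route is a bit more elementary---no implicit function theorem, hence no need to check that $\tilde u_\epsilon$ has enough regularity for the map $(\theta,x_0)\mapsto(\textup{constraints})$ to be $C^1$---while your simultaneous approach is the standard modulation-theory setup and treats the real and imaginary parts symmetrically in a single step. Both lead to the same self-improving inequality and the same conclusion.
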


\begin{proof}
We prove the proposition only for the higher-order NLH, because the proof for the higher-order NLS follows similarly. Let $\epsilon_0>0$ be a small number given in Proposition \ref{prop: non-degeneracy estimates}, and let $\delta>0$ be sufficiently small numbers to be chosen later. Suppose that $0<\epsilon\leq\epsilon_0$ and $\tilde{u}_\epsilon$ is another solution to the higher-order equation in a $\delta$-ball centered at $u_0$ in $H_{P_\epsilon}^1(\mathbb{R}^3;\mathbb{C})$. 

First, we aim to show that the imaginary part of $\tilde{u}_\epsilon$ is orthogonal to $u_\epsilon$ up to phase shift. To this end, we consider 
$$\tilde{U}_\epsilon=\frac{\overline{\langle \tilde{u}_\epsilon, u_\epsilon\rangle_{H_{P_\epsilon}^1}}}{|\langle \tilde{u}_\epsilon, u_\epsilon\rangle_{H_{P_\epsilon}^1}|}\tilde{u}_\epsilon,$$
which also solves the higher-order equation. Here, since $u_\epsilon$ and $\tilde{u}_\epsilon$ are assumed to be sufficiently close to $u_0$, the denominator $\langle \tilde{u}_\epsilon, u_\epsilon\rangle_{H_{P_\epsilon}^1}\neq0$. Note that $\tilde{u}_\epsilon\mapsto\tilde{U}_\epsilon$ is a natural action, because if $\tilde{u}_\epsilon$ is simply a rotated $u_\epsilon$ on the complex plane, then this action rotates it back to $u_\epsilon$. Moreover, we have
\begin{align*}
\left\|\tilde{U}_\epsilon-u_\epsilon\right\|_{H_{P_\epsilon}^1}^2&=\|\tilde{U}_\epsilon\|_{H_{P_\epsilon}^1}^2+\|u_\epsilon\|_{H_{P_\epsilon}^1}^2-2\textup{Re}\langle \tilde{U}_\epsilon, u_\epsilon\rangle_{H_{P_\epsilon}^1}\\
&=\|\tilde{u}_\epsilon\|_{H_{P_\epsilon}^1}^2+\|u_\epsilon\|_{H_{P_\epsilon}^1}^2-2|\langle \tilde{u}_\epsilon, u_\epsilon\rangle_{H_{P_\epsilon}^1}|\\
&\leq\left|\|\tilde{u}_\epsilon\|_{H_{P_\epsilon}^1}^2+\|u_\epsilon\|_{H_{P_\epsilon}^1}^2-2\textup{Re}\langle \tilde{u}_\epsilon, u_\epsilon\rangle_{H_{P_\epsilon}^1}\right|=\|\tilde{u}_\epsilon-u_\epsilon\|_{H_{P_\epsilon}^1}^2\\
&\leq\left\{\|\tilde{u}_\epsilon-u_0\|_{H_{P_\epsilon}^1}+\|u_\epsilon-u_0\|_{H_{P_\epsilon}^1}\right\}^2\leq4\delta^2
\end{align*}
and
$$\langle \textup{Im}(\tilde{U}_\epsilon), u_\epsilon\rangle_{H_{P_\epsilon}^1}=\textup{Im}\left\{\frac{\overline{\langle \tilde{u}_\epsilon, u_\epsilon\rangle_{H_{P_\epsilon}^1}}}{|\langle \tilde{u}_\epsilon, u_\epsilon\rangle_{H_{P_\epsilon}^1}|}\langle \tilde{u}_\epsilon, u_\epsilon\rangle_{H_{P_\epsilon}^1}\right\}=\textup{Im}\Big\{|\langle \tilde{u}_\epsilon, u_\epsilon\rangle_{H_{P_\epsilon}^1}|\Big\}=0.$$
Therefore, replacing $\tilde{u}_\epsilon$ by $\tilde{U}_\epsilon$ and $\delta$ by $\frac{\delta}{2}$, we may assume that the imaginary part of $\tilde{u}_\epsilon$ is orthogonal to $u_\epsilon$ in $H_{P_\epsilon}^1$.

We denote the difference between two solutions by
$$r_\epsilon:=\tilde{u}_\epsilon-u_\epsilon=v_\epsilon+iw_\epsilon\quad\left(\Leftrightarrow\tilde{u}_\epsilon=(u_\epsilon+v_\epsilon)+iw_\epsilon\right),$$
where $v_\epsilon$ and $w_\epsilon$ are real-valued, and $\langle w_\epsilon, u_\epsilon\rangle_{H_{P_\epsilon}^1}=0$. When $u_\epsilon$ and $\tilde{u}_\epsilon$ are solutions to the higher-order NLH, then the difference $r_\epsilon$ satisfies 
\begin{align*}
(P_\epsilon+1) r_\epsilon&=\left(|x|^{-1}*|\tilde{u}_\epsilon|^2\right)\tilde{u}_\epsilon-\left(|x|^{-1}*|u_\epsilon|^2\right)u_\epsilon\\
&=\Big(|x|^{-1}*\left(u_\epsilon^2+2u_\epsilon v_\epsilon+|r_\epsilon|^2\right)\Big)\left((u_\epsilon+v_\epsilon)+iw_\epsilon\right)-\left(|x|^{-1}*|u_\epsilon|^2\right)u_\epsilon\\
&=\Big(|x|^{-1}*\left(2u_\epsilon v_\epsilon+|r_\epsilon|^2\right)\Big)u_\epsilon+\Big(|x|^{-1}*\left(u_\epsilon^2+2u_\epsilon v_\epsilon+|r_\epsilon|^2\right)\Big)(v_\epsilon+iw_\epsilon).
\end{align*}
Moving the linear terms on the right hand side to the left then using \eqref{LA relation}, the imaginary part of the equation (for $w_\epsilon$) can be written as
$$\mathcal{L}_\epsilon^-w_\epsilon=\Big(|x|^{-1}*\left(2u_\epsilon v_\epsilon+|r_\epsilon|^2\right)\Big)w_\epsilon.$$
Then, by Proposition \ref{prop: non-degeneracy estimates} and the nonlinear estimate (Lemma \ref{multilinear estimate}), we prove that
\begin{align*}
\beta\|w_\epsilon\|_{H_{P_\epsilon}^1}&\leq\|\mathcal{L}_\epsilon^-w_\epsilon\|_{H_{P_\epsilon}^{-1}}=\left\|\Big(|x|^{-1}*\left(2u_\epsilon v_\epsilon+|r_\epsilon|^2\right)\Big)w_\epsilon\right\|_{H_{P_\epsilon}^{-1}}\\
&\leq C \left(\|u_\epsilon\|_{H_{P_\epsilon}^1}+\|r_\epsilon\|_{H_{P_\epsilon}^1}\right)\|r_\epsilon\|_{H_{P_\epsilon}^1}\|w_\epsilon\|_{H_{P_\epsilon}^1}\\
&\leq C\left(\|u_0\|_{H_{P_\epsilon}^1}+2\delta\right)\delta\|w_\epsilon\|_{H_{P_\epsilon}^1}.
\end{align*}
Therefore, choosing small $\delta$ such that $C(\|u_0\|_{H_{P_\epsilon}^1}+2\delta)\delta<\beta$, we conclude that $w_\epsilon=0$.

By a suitable phase shift, we may assume that $\tilde{u}_\epsilon$ is real-valued. Furthermore, by translating $\tilde{u}_\epsilon$ so that $\|\tilde{u}_\epsilon(\cdot-a)-u_\epsilon\|_{H_{P_\epsilon}^1}=\|\tilde{u}_\epsilon-u_\epsilon(\cdot+a)\|_{H_{P_\epsilon}^1}$ is minimized, equivalently
$$\frac{\partial}{\partial_{x_j}}\Bigg|_{a=0}\|\tilde{u}_\epsilon-u_\epsilon(\cdot+a)\|_{H_{P_\epsilon}^1}^2=0,$$
we may assume that $\tilde{u}_\epsilon$ is orthogonal to $\partial_{x_j}u_\epsilon$ in $H_{P_\epsilon}^1$ for all $j=1,2,3$. Now, we write the equation for the difference $r_\epsilon=\tilde{u}_\epsilon-u_\epsilon$,
\begin{align*}
(P_\epsilon+1) r_\epsilon&=\mathcal{N}'(\tilde{u}_\epsilon)-\mathcal{N}'(u_\epsilon)=\mathcal{N}'(u_\epsilon+r_\epsilon)-\mathcal{N}'(u_\epsilon)\\
\Rightarrow\mathcal{L}_\epsilon^+r_\epsilon&=\mathcal{N}'(u_\epsilon+r_\epsilon)-\mathcal{N}'(u_\epsilon)-\mathcal{N}_{u_\epsilon}^+(r_\epsilon),
\end{align*}
where $\mathcal{N}_{u_\epsilon}^+$ is defined in \eqref{N+}. Since $r_\epsilon=\tilde{u}_\epsilon-u_\epsilon$ is orthogonal to $\partial_{x_j}u_\epsilon$ in $H_{P_\epsilon}^1$, by Proposition \ref{prop: non-degeneracy estimates} and the nonlinear estimate (Lemma \ref{multilinear estimate}) again, we obtain
\begin{align*}
\beta\|r_\epsilon\|_{H_{P_\epsilon}^1}&\leq\|\mathcal{L}_\epsilon^+r_\epsilon\|_{H_{P_\epsilon}^{-1}}=\left\|\left(|x|^{-1}*r_\epsilon^2\right)u_\epsilon+\Big(|x|^{-1}*\left(2u_\epsilon r_\epsilon+r_\epsilon^2\right)\Big)r_\epsilon\right\|_{H_{P_\epsilon}^{-1}}\\
&\leq \tilde{C} \left(\|u_\epsilon\|_{H_{P_\epsilon}^1}+\|r_\epsilon\|_{H_{P_\epsilon}^1}\right)\|r_\epsilon\|_{H_{P_\epsilon}^1}^2\\
&\leq \tilde{C} \left(\|u_0\|_{H_{P_\epsilon}^1}+2\delta\right)\delta\|r_\epsilon\|_{H_{P_\epsilon}^1}.
\end{align*}
Choosing even smaller $\delta>0$ such that $\tilde{C}(\|u_0\|_{H_{P_\epsilon}^1}+2\delta)\delta<\beta$ if necessary, we prove that $r_\epsilon=0$. Therefore, we conclude that $\tilde{u}_\epsilon=u_\epsilon$ up to translation and phase shift.
\end{proof}

Now, we are ready to prove the main theorem.
\begin{proof}[Proof of Theorem \ref{main thm}]
By Proposition \ref{convergence}, if $\epsilon>0$ is small enough, then ground states $Q_\epsilon^J$'s are close to the reference ground state $Q_0$ in $H_{P_\epsilon}^1$ (modifying the sequence by translation and phase shift if necessary). However, by uniqueness in Proposition \ref{uniqueness}, $Q_\epsilon^J$ is identified with the radially symmetric real-valued solution $u_\epsilon$, constructed in Proposition \ref{contraction}. Moreover, by Proposition \ref{Non-degeneracy epsilon}, it is non-degenerate. Therefore, we  prove the main theorem.
\end{proof}

\section{Proof of Theorem \ref{approximation}}

We proceed exactly as in the proof of Proposition \ref{contraction}. We denote by
$$r_c^J:=Q_c^J-Q_c$$
the difference between two solutions. Then, it satisfies 
\begin{align*}
(P_c^J+1) r_c^J&=(P_c^J+1)Q_c^J-(P_c^J+1)Q_c\\
&=\left(P_c-P_c^J\right)Q_c+(P_c^J+1)Q_c^J-(P_c+1)Q_c\\
&=\left(P_c-P_c^J\right)Q_c+\mathcal{N}'({Q_c^J})-\mathcal{N}'({Q_c})\\
&=\left(P_c-P_c^J\right)Q_c+\mathcal{N}'({Q_c+r_c})'-\mathcal{N}'({Q_c}),
\end{align*}
where $P_c=\sum_{j=1}^J\frac{(-1)^{j-1}\alpha_j}{m^{2j-1}c^{2j-2}}$ and $\alpha_j=\frac{(2j-2)!}{j!(j-1)!2^{2j-1}}$. Moving the linear terms on the right hand side to the left as above, we write
\begin{equation}\label{reformulated higher-order eq}
\mathcal{L}_c^{J,+} r_c^J=\left(P_c-P_c^J\right)Q_c+\mathcal{N}'({Q_c+r_c})'-\mathcal{N}'({Q_c})-\mathcal{N}_{Q_c}^+(r_c),
\end{equation}
where
$$\mathcal{L}_c^{J,+}=P_c^J+1-\mathcal{N}_{Q_c}^+=\sqrt{1+P_c^J}\left(\textup{Id}-\frac{1}{\sqrt{1+P_c^J}}\mathcal{N}_{Q_c}^+\frac{1}{\sqrt{1+P_c^J}}\right)\sqrt{1+P_c^J}.$$
Repeating the proof of Lemma \ref{radial lower bound} together with $Q_c\to Q_0$ in $H^1$ as $c\to \infty$, one can show that 
\begin{align*}
&\textup{Id}-\frac{1}{\sqrt{1+P_c^J}}\mathcal{N}_{Q_c}^+\frac{1}{\sqrt{1+P_c^J}}\\
&=\left(\textup{Id}-\frac{1}{\sqrt{1-\Delta}}\mathcal{N}_{Q_0}^+\frac{1}{\sqrt{1-\Delta}}\right)+\left(\frac{1}{\sqrt{1-\Delta}}\mathcal{N}_{Q_0}^+\frac{1}{\sqrt{1-\Delta}}-\frac{1}{\sqrt{1+P_c^J}}\mathcal{N}_{Q_c}^+\frac{1}{\sqrt{1+P_c^J}}\right)
\end{align*}
is invertible on $L_{rad}^2(\mathbb{R}^d;\mathbb{R})$ and its inverse is uniformly bounded for sufficiently large $c\geq1$. Hence, applying the trivial embedding $H_{P_c^J}^1\hookrightarrow H^1$ (from Lemma \ref{uniform lower bound of hpNLH}) and its dual embedding, we obtain 
\begin{align*}
\|r_c^J\|_{H^1}&\lesssim \|r_c^J\|_{H_{P_c^J}^1}=\left\|(\mathcal{L}_c^{J,+})^{-1}\Bigg\{\left(P_c-P_c^J\right)Q_c+\mathcal{N}'({Q_c+r_c})'-\mathcal{N}'({Q_c})-\mathcal{N}_{Q_c}^+(r_c)\Bigg\}\right\|_{H_{P_c^J}^1}\\
&\lesssim\left\|\left(P_c-P_c^J\right)Q_c+\mathcal{N}'({Q_c+r_c})'-\mathcal{N}'({Q_c})-\mathcal{N}_{Q_c}^+(r_c)\right\|_{H_{P_c^J}^{-1}}\\
&\lesssim\left\|\left(P_c-P_c^J\right)Q_c\right\|_{H^{-1}}+\left\|\mathcal{N}'({Q_c+r_c})'-\mathcal{N}'({Q_c})-\mathcal{N}_{Q_c}^+(r_c)\right\|_{H^{-1}},
\end{align*}
where the implicit constants do not depend on $c\geq1$. Therefore, it follows from the nonlinear estimates (Lemma \ref{multilinear estimate}) that for sufficiently large $c\geq1$, 
$$\|r_c^J\|_{H^1}\lesssim\left\|\left(P_c-P_c^J\right)Q_c\right\|_{H^{-1}}\lesssim \frac{1}{c^{2J}}\|Q_c\|_{H^{2J+1}},$$
because by Taylor's theorem, 
$$\left|\left(\sqrt{c^2s+m^2c^4}-mc^2\right)-\sum_{j=1}^J\frac{(-1)^{j-1}\alpha_j}{m^{2j-1}c^{2j-2}}s^j\right|\lesssim\frac{s^{J+1}}{c^{2J}}.$$
Finally, by the uniform bound on high Sobolev norm of $Q_c$ (see Proposition \ref{upgraded convergence} or \cite{CHS1}), we conclude that $\|Q_c^J-Q_c\|_{H^1}=\|r_c^J\|_{H^1}\lesssim_J c^{-2J}$.

\appendix

\section{Nonlinear estimates}

We show the nonlinear estimates which are used in the contraction mapping argument. 

\begin{lem}[Nonlinear estimates]\label{nonlinear estimates}
Let $u\in H^1$ be real-valued. For any $\eta>0$, there exists $\delta_0>0$, depending on $\|u\|_{H^1(\mathbb{R}^d;\mathbb{R})}$ and $\eta$, such that if $0<\delta\leq\delta_0$ and
$$\|r\|_{H^1(\mathbb{R}^d;\mathbb{R})}, \|\tilde{r}\|_{H^1(\mathbb{R}^d;\mathbb{R})}\leq \delta,$$
then
$$\left\|\mathcal{N}'(u+r)-\mathcal{N}'(u)-\mathcal{N}_{u}^+(r)\right\|_{L^2(\mathbb{R}^d;\mathbb{R})}\leq \eta\|r\|_{H^1(\mathbb{R}^d;\mathbb{R})}$$
and
$$\left\|\left(\mathcal{N}'(u+r)-\mathcal{N}'(u)-\mathcal{N}_{u}^+(r)\right)-\left(\mathcal{N}'(u+\tilde{r})-\mathcal{N}'(u)-\mathcal{N}_{u}^+(\tilde{r})\right)\right\|_{L^2(\mathbb{R}^d;\mathbb{R})}\leq\eta\|r-\tilde{r}\|_{H^1(\mathbb{R}^d;\mathbb{R})}.$$
\end{lem}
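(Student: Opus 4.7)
The key structural observation is that the residual
\[
R(u,r) \;:=\; \mathcal{N}'(u+r) - \mathcal{N}'(u) - \mathcal{N}_u^+(r)
\]
is precisely the second-order Taylor remainder of $\mathcal{N}'$ at $u$: since $\mathcal{N}_u^+$ is the Fr\'echet derivative $\mathcal{N}''(u)$ of $\mathcal{N}'$, the constant and linear parts in $r$ cancel and $R(u,r)$ is at least quadratic in $r$. My plan is to render this explicit by algebraic expansion in each of the two cases, then bound every resulting multilinear term by H\"older's inequality combined with the Sobolev embedding $H^1(\mathbb{R}^d)\hookrightarrow L^{2k+2}(\mathbb{R}^d)$ (legitimate under the subcritical assumption \eqref{subcritical}) for NLS, and with the Hardy--Littlewood--Sobolev inequality for NLH. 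Once a bound of the form
$\|R(u,r)\|_{L^2}\lesssim_{\|u\|_{H^1}}(1+\|r\|_{H^1})^{p-2}\|r\|_{H^1}^2$
is in hand, the smallness prefactor $\|r\|_{H^1}\leq\delta$ immediately delivers the desired $\leq\eta\|r\|_{H^1}$ by choosing $\delta_0 = \delta_0(\|u\|_{H^1},\eta)$ small.

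For NLS with real-valued $u$, $\mathcal{N}'(u)=u^{2k+1}$ and the binomial theorem yields
\[
R(u,r)=\sum_{j=2}^{2k+1}\binom{2k+1}{j}\,u^{\,2k+1-j}\,r^{j},
\]
and every summand carries $r^j$ with $j\geq 2$. Each term can be estimated by $\|u\|_{H^1}^{2k+1-j}\|r\|_{H^1}^{j}$ via H\"older with Sobolev-admissible exponents: when $d=1,2$ we use $H^1\hookrightarrow L^{\infty}$ or $H^1\hookrightarrow L^{p}$ for all $p<\infty$, while in the borderline case $d=3$, $k=1$ we appeal to $H^1\hookrightarrow L^{6}$ and directly control the two terms $ur^{2}$ and $r^{3}$. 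For NLH, a direct expansion gives
\[
R(u,r)=\bigl(|x|^{-1}*r^{2}\bigr)u+2\bigl(|x|^{-1}*(ur)\bigr)r+\bigl(|x|^{-1}*r^{2}\bigr)r,
\]
and each summand is bounded by combining the HLS estimate $\bigl\|\,|x|^{-1}*(fg)\bigr\|_{L^6(\mathbb{R}^3)}\lesssim\|fg\|_{L^{6/5}(\mathbb{R}^3)}$ with $H^1(\mathbb{R}^3)\hookrightarrow L^{12/5}\cap L^{3}$, yielding
$\|R(u,r)\|_{L^2}\lesssim(\|u\|_{H^1}+\|r\|_{H^1})\,\|r\|_{H^1}^{2}$. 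This proves the first inequality.

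For the second (Lipschitz-type) inequality, I would invoke the mean value theorem along the segment from $\tilde r$ to $r$. Since the Fr\'echet derivative of $R(u,\cdot)$ at $v$ is precisely $\mathcal{N}_{u+v}^+-\mathcal{N}_u^+$,
\[
R(u,r)-R(u,\tilde r)\;=\;\int_{0}^{1}\bigl[\,\mathcal{N}_{u+sr+(1-s)\tilde r}^{+}-\mathcal{N}_{u}^{+}\,\bigr](r-\tilde r)\,ds.
\]
An algebraic expansion analogous to the one above (now applied to $\mathcal{N}_{u+v}^+ - \mathcal{N}_u^+$) shows that every term contains $v$ at least once, so the same H\"older/HLS arguments produce
\[
\bigl\|\,\mathcal{N}_{u+v}^{+}-\mathcal{N}_{u}^{+}\bigr\|_{H^{1}\to L^{2}}\;\lesssim\;(\|u\|_{H^{1}}+\|v\|_{H^{1}})^{\,p-2}\,\|v\|_{H^{1}},
\]
with $p=2k+1$ for NLS and $p=3$ for NLH. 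Substituting $v=sr+(1-s)\tilde r$, whose $H^{1}$ norm is bounded by $\delta$, and integrating in $s$ delivers the bound with prefactor $\delta$, which is again $\leq\eta\,\|r-\tilde r\|_{H^1}$ for $\delta_0$ small enough.

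The proof is essentially multilinear bookkeeping; the only mild technicality is the borderline case $d=3$, where every Sobolev/HLS exponent sits exactly at the endpoint. However, the subcritical condition \eqref{subcritical} is tailored precisely so that every required exponent remains admissible, so the argument closes uniformly across all the allowed $(d,k)$.
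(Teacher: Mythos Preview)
Your argument is correct and, for the first inequality, coincides with the paper's: both expand $R(u,r)$ explicitly (binomial for NLS, direct algebra for NLH) and then apply the multilinear estimates of Lemma~\ref{multilinear estimate}. The only genuine difference is in the second (Lipschitz) inequality. The paper proceeds by explicit algebraic factoring of $R(u,r)-R(u,\tilde r)$: for NLH it writes out all five trilinear pieces containing $r-\tilde r$, and for NLS it uses $r^{j}-\tilde r^{\,j}=(r-\tilde r)(r^{j-1}+\cdots+\tilde r^{\,j-1})$ term by term. You instead use the integral mean-value representation
\[
R(u,r)-R(u,\tilde r)=\int_0^1\bigl[\mathcal{N}_{u+sr+(1-s)\tilde r}^{+}-\mathcal{N}_u^{+}\bigr](r-\tilde r)\,ds
\]
and bound the operator $\mathcal{N}_{u+v}^{+}-\mathcal{N}_u^{+}:H^1\to L^2$ uniformly in $s$. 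Both routes rest on the same multilinear estimates and yield the same bound; your version is slightly more conceptual (it would apply verbatim to any $C^2$ nonlinearity with the right mapping properties), while the paper's explicit expansion makes the constants and the exact structure of each term visible. Neither approach has any advantage in the borderline case $d=3$, which you correctly identify as handled by the subcriticality assumption.
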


The above lemma follows from the multilinear estimates.

\begin{lem}[Multilinear estimates]\label{multilinear estimate}
We have
$$\left\|\left(\frac{1}{|x|}*(\phi_1\phi_2)\right)\phi_3\right\|_{L^2(\mathbb{R}^3;\mathbb{R})}\lesssim \prod_{j=1}^3\|\phi_j\|_{H^1(\mathbb{R}^3;\mathbb{R})}.$$
Moreover, if $d=1,2$ and $k\in\mathbb{N}$ or if $d=3$ and $k=1$, then
$$\left\|\prod_{j=1}^{2k+1}\phi_j\right\|_{L^2(\mathbb{R}^d;\mathbb{R})}\lesssim \prod_{j=1}^{2k+1}\|\phi_j\|_{H^1(\mathbb{R}^d;\mathbb{R})}.$$
\end{lem}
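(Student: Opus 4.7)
The plan is to deduce both estimates from three standard linear tools: the Hardy--Littlewood--Sobolev (HLS) inequality, H\"older's inequality, and the Sobolev embedding $H^1(\mathbb{R}^d)\hookrightarrow L^p(\mathbb{R}^d)$ in the admissible range dictated by the dimension. Nothing beyond these is needed, and the role of the subcritical hypothesis \eqref{subcritical} will be pinpointed at the single spot where it is actually used.

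For the Hartree estimate on $\mathbb{R}^3$, I would first split the $L^2$-norm by H\"older via $\tfrac{1}{6}+\tfrac{1}{3}=\tfrac{1}{2}$ to get
\[
\left\|\left(\tfrac{1}{|x|}*(\phi_1\phi_2)\right)\phi_3\right\|_{L^2}
\leq \left\|\tfrac{1}{|x|}*(\phi_1\phi_2)\right\|_{L^6}\|\phi_3\|_{L^3}.
\]
Next, HLS applied to the Riesz kernel $1/|x|$ on $\mathbb{R}^3$ (corresponding to $\alpha=2$, $d=3$) yields $\|\tfrac{1}{|x|}*(\phi_1\phi_2)\|_{L^6}\lesssim \|\phi_1\phi_2\|_{L^{6/5}}$, since $\tfrac{5}{6}-\tfrac{1}{6}=\tfrac{2}{3}$. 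One more H\"older step gives $\|\phi_1\phi_2\|_{L^{6/5}}\leq \|\phi_1\|_{L^{12/5}}\|\phi_2\|_{L^{12/5}}$. Finally, the three-dimensional Sobolev embedding $H^1(\mathbb{R}^3)\hookrightarrow L^p(\mathbb{R}^3)$, valid for $2\leq p\leq 6$, controls each of $\|\phi_3\|_{L^3}$ and $\|\phi_j\|_{L^{12/5}}$ by $\|\phi_j\|_{H^1}$, because both $3$ and $12/5$ lie in the range $[2,6]$.

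For the pointwise product estimate, my approach is the fully symmetric generalized H\"older inequality
\[
\left\|\prod_{j=1}^{2k+1}\phi_j\right\|_{L^2}\leq \prod_{j=1}^{2k+1}\|\phi_j\|_{L^{2(2k+1)}},
\]
valid since $(2k+1)\cdot\tfrac{1}{2(2k+1)}=\tfrac{1}{2}$, followed by the Sobolev embedding $H^1(\mathbb{R}^d)\hookrightarrow L^{2(2k+1)}(\mathbb{R}^d)$. In $d=1,2$ this embedding holds for every finite exponent, whereas in $d=3$ it requires $2(2k+1)\leq 6$, i.e., $k\leq 1$. This is precisely the single place where the hypothesis \eqref{subcritical} enters, and it matches the subcritical range exactly.

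\textbf{Anticipated difficulty.} Honestly there is none of substance: the proof reduces to bookkeeping of exponents at each H\"older/HLS step and verifying that they fall within the allowed Sobolev range. The only noteworthy feature is that in $d=3$ the upper bound $p\leq 6$ for $H^1\hookrightarrow L^p$ is sharp, which is why the restriction $k=1$ there is forced by this method and mirrors the usual $H^1$-subcritical threshold for odd-power nonlinearities.
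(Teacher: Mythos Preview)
Your proof is correct and follows essentially the same route as the paper: H\"older, then Hardy--Littlewood--Sobolev (the paper calls it Young's), then Sobolev embedding, with the product estimate handled identically via $L^{2(2k+1)}$. The only cosmetic difference is your choice of exponents $(L^6,L^3,L^{12/5})$ for the Hartree term versus the paper's symmetric choice $(L^9,L^{18/7},L^{18/7})$; both lie in the admissible Sobolev range and the argument goes through unchanged.
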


\begin{proof}
By the H\"older, Young's and Sobolev inequalities, we prove that
\begin{align*}
\left\|\left(\frac{1}{|x|}*(\phi_1\phi_2)\right)\phi_3\right\|_{L^2(\mathbb{R}^3;\mathbb{R})}&\leq \left\|\left(\frac{1}{|x|}*(\phi_1\phi_2)\right)\right\|_{L^9(\mathbb{R}^3;\mathbb{R})}\|\phi_3\|_{L^{18/7}(\mathbb{R}^3;\mathbb{R})}\\
&\lesssim\|\phi_1\phi_2\|_{L^{9/7}(\mathbb{R}^3;\mathbb{R})}\|\phi_3\|_{L^{18/7}(\mathbb{R}^3;\mathbb{R})}\\
&\lesssim\prod_{j=1}^3\|\phi_j\|_{L^{18/7}(\mathbb{R}^3;\mathbb{R})}\lesssim\prod_{j=1}^3\|\phi_j\|_{H^1(\mathbb{R}^3;\mathbb{R})}
\end{align*}
and similarly,
$$\left\|\prod_{j=1}^{2k+1}\phi_j\right\|_{L^2(\mathbb{R}^d;\mathbb{R})}\leq \prod_{j=1}^{2k+1}\|\phi_j\|_{L^{2(2k+1)}(\mathbb{R}^d;\mathbb{R})}\lesssim \prod_{j=1}^{2k+1}\|\phi_j\|_{H^1(\mathbb{R}^d;\mathbb{R})}.$$
\end{proof}

\begin{proof}[Proof of Lemma \ref{nonlinear estimates}]
Suppose that $\|r\|_{H^1},\|\tilde{r}\|_{H^1}\leq\|u\|_{H^1}$. For the Hartree nonlinearity, by algebra, we write
$$\mathcal{N}'(u+r)-\mathcal{N}'(u)-\mathcal{N}_{u}^+(r)=\left(\frac{1}{|x|}*r^2\right)u+2\left(\frac{1}{|x|}*(ur)\right)r+\left(\frac{1}{|x|}*r^2\right)r$$
and
\begin{align*}
&\left(\mathcal{N}'(u+r)-\mathcal{N}'(u)-\mathcal{N}_{u}^+(r)\right)-\left(\mathcal{N}'(u+\tilde{r})-\mathcal{N}'(u)-\mathcal{N}_{u}^+(\tilde{r})\right)\\
&=\left(\frac{1}{|x|}*\left((r+\tilde{r})(r-\tilde{r})\right)\right)u+2\left(\frac{1}{|x|}*\left(u(r-\tilde{r})\right)\right)r+2\left(\frac{1}{|x|}*(u\tilde{r})\right)(r-\tilde{r})\\
&\quad+\left(\frac{1}{|x|}*\left((r+\tilde{r})(r-\tilde{r})\right)\right)r+\left(\frac{1}{|x|}*\tilde{r}^2\right)(r-\tilde{r})
\end{align*} 
Thus, by the multilinear estimate (Lemma \ref{multilinear estimate}), 
$$\left\|\mathcal{N}'(u+r)-\mathcal{N}'(u)-\mathcal{N}_{u}^+(r)\right\|_{L^2}\leq C\Big(\|u\|_{H^1}+\|r\|_{H^1}\Big)\|r\|_{H^1}^2\leq 2C\delta\|u\|_{H^1}\|r\|_{H^1}$$
and
\begin{align*}
&\left\|\left(\mathcal{N}'(u+r)-\mathcal{N}'(u)-\mathcal{N}_{u}^+(r)\right)-\left(\mathcal{N}'(u+\tilde{r})-\mathcal{N}'(u)-\mathcal{N}_{u}^+(\tilde{r})\right)\right\|_{L^2}\\
&\leq C\Big(\|u\|_{H^1}+\|r\|_{H^1}+\|\tilde{r}\|_{H^1}\Big)\Big(\|r\|_{H^1}+\|\tilde{r}\|_{H^1}\Big)\|r-\tilde{r}\|_{H^1}\\
&\leq 6C\delta\|u\|_{H^1}\|r-\tilde{r}\|_{H^1}.
\end{align*}
Then, taking $\delta_0=\eta\min\{\frac{1}{6C\|u\|_{H^1}},\|u\|_{H^1}\}$, we prove the lemma for the Hartree nonlinearity.

Similarly, for the polynomial nonlinearity, by the multilinear estimate (Lemma \ref{multilinear estimate}), 
\begin{align*}
\left\|\mathcal{N}'(u+r)-\mathcal{N}'(u)-\mathcal{N}_{u}^+(r)\right\|_{L^2}&=\left\|\sum_{j=2}^{2k+1} \begin{pmatrix}
    2k+1  \\
    j \\
\end{pmatrix} u^{2k+1-j} r^j\right\|_{L^2}\\
&\leq \sum_{j=2}^{2k+1} \begin{pmatrix}
    2k+1  \\
    j \\
\end{pmatrix} \left\|u^{2k+1-j} r^j\right\|_{L^2}\\
&\leq C\sum_{j=2}^{2k+1} \begin{pmatrix}
    2k+1  \\
    j \\
\end{pmatrix} \|u\|_{H^1}^{2k+1-j} \|r\|_{H^1}^j\\
&\leq C_k\delta\|u\|_{H^1}^{2k-1}\|r\|_{H^1}
\end{align*}
and
\begin{align*}
&\left\|\left(\mathcal{N}'(u+r)-\mathcal{N}'(u)-\mathcal{N}_{u}^+(r)\right)-\left(\mathcal{N}'(u+\tilde{r})-\mathcal{N}'(u)-\mathcal{N}_{u}^+(\tilde{r})\right)\right\|_{L^2}\\
&=\left\|\sum_{j=3}^{2k+1} \begin{pmatrix}
    2k+1  \\
    j \\
\end{pmatrix} u^{2k+1-j} (r-\tilde{r})(r^{j-1}+r^{j-2}\tilde{r}+\cdots+\tilde{r}^{j-1})\right\|_{L^2}\\
&\leq\sum_{j=3}^{2k+1} \begin{pmatrix}
    2k+1  \\
    j \\
\end{pmatrix} \left\|u^{2k+1-j} (r-\tilde{r})(r^{j-1}+r^{j-2}\tilde{r}+\cdots+\tilde{r}^{j-1})\right\|_{L^2}\\
&\leq C\sum_{j=3}^{2k+1} \begin{pmatrix}
    2k+1  \\
    j \\
\end{pmatrix} \|u\|_{H^1}^{2k+1-j} \|r-\tilde{r}\|_{H^1}\left(\|r\|_{H^1}^{j-1}+\|r\|_{H^1}^{j-2}\|\tilde{r}\|_{H^1}+\cdots+\|\tilde{r}\|_{H^1}^{j-1}\right)\\
&\leq C_k\delta\|u\|_{H^1}^{2k-1}\|r-\tilde{r}\|_{H^1}
\end{align*}
for some constant $C_k>0$. Then, taking $\delta_0=\eta\min\{\frac{1}{2C_k\|u\|_{H^1}^{2k-1}},\|u\|_{H^1}\}$, we complete the proof of the lemma for the polynomial nonlinearity.
\end{proof}

\section{Uniform lower bound for higher-order operators in \eqref{hpNLH}}

\begin{lem}[Uniform lower bound for higher-order operators in \eqref{hpNLH}]\label{uniform lower bound of hpNLH}
\label{lower-bound}
For any $\xi\in\mathbb{R}^3$, we have
$$\sum_{j=1}^{2k-1}\frac{(-1)^{j-1}\alpha_j}{m^{2j-1}c^{2j-2}}|\xi|^{2j}\geq \frac{|\xi|^2}{2m},$$
where $a_j=\frac{(2j-2)!}{j!(j-1)! 2^{2j-1}}$.
\end{lem}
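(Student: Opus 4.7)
The plan is to reduce the bound to a one-variable polynomial inequality on $[0,\infty)$ by Fourier rescaling and then to exploit the Taylor series structure of the pseudo-relativistic symbol. Setting $y = |\xi|^{2}/(m^{2}c^{2}) \ge 0$, a direct comparison with the binomial expansion of $\sqrt{1+y}$ identifies $(-1)^{j-1}\alpha_{j}$ with the generalized binomial coefficient $\binom{1/2}{j}$. Consequently the left-hand side equals $mc^{2}\,g(y)$ with
$$g(y) := \sum_{j=1}^{2k-1} \binom{1/2}{j}\, y^{j},$$
which is the degree-$(2k-1)$ partial Taylor polynomial of $\sqrt{1+y}-1$ at the origin. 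Since the right-hand side equals $mc^{2}y/2$, the lemma reduces to proving
$$g(y) \,\geq\, \tfrac{y}{2} \quad\textup{for all } y \geq 0.$$

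For the second step I would exploit the alternating-sign behaviour of the derivatives of $h(y)=\sqrt{1+y}$. An easy induction gives $h^{(j)}(y) = (-1)^{j-1} c_{j}(1+y)^{-(2j-1)/2}$ with $c_{j}>0$, so that $(-1)^{j-1}h^{(j)}(y)>0$ on $(-1,\infty)$. Taylor's theorem with Lagrange remainder, together with the oddness of $J = 2k-1$, then yields $\sqrt{1+y} \leq T_{J}(y) = 1 + g(y)$ on $[0,\infty)$; equivalently, $g(y) \geq \sqrt{1+y}-1$ on $[0,\infty)$.

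The main obstacle is precisely that this clean Taylor-remainder lower bound is not, by itself, strong enough to give $g(y)\geq y/2$, because $\sqrt{1+y}-1 \leq y/2$ by concavity of $h$. To close this gap one must analyse the polynomial $P(y) := g(y) - y/2 = \sum_{j=2}^{2k-1}\binom{1/2}{j}\,y^{j}$ directly. Its signs alternate starting from $\binom{1/2}{2}=-\tfrac18$, so the natural strategy is to pair consecutive terms $\binom{1/2}{2\ell}\,y^{2\ell} + \binom{1/2}{2\ell+1}\,y^{2\ell+1}$, use the ratios $|\binom{1/2}{j+1}|/|\binom{1/2}{j}|$ (which approach $1$ from below), and let the final positive monomial $\binom{1/2}{2k-1}\,y^{2k-1}$ (coming from the odd upper limit $J = 2k-1$) absorb any remaining negative contributions on the relevant range of $y$. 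This sign-pattern combinatorial analysis of the Taylor coefficients of $\sqrt{1+y}$ is where I expect the bulk of the work to lie; once completed, the resulting uniform-in-$c$ ellipticity estimate is exactly what is invoked via the embedding $H_{P_c^{J}}^{1}\hookrightarrow H^{1}$ in the proof of Theorem \ref{approximation}.
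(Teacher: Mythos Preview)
Your reduction to proving $P(y):=g(y)-\tfrac{y}{2}=\sum_{j=2}^{2k-1}\binom{1/2}{j}\,y^{j}\ge 0$ on $[0,\infty)$ is correct, and so is your observation that the Lagrange remainder only gives $g(y)\ge\sqrt{1+y}-1\le y/2$. The pairing you sketch, however, cannot close this gap: $P(y)$ is genuinely negative for small $y>0$ whenever $k\ge 2$, since its lowest-order term is $-\tfrac{1}{8}y^{2}$. Concretely, for $k=2$, $m=c=1$, $|\xi|=1$ the left-hand side of the lemma is $\tfrac{1}{2}-\tfrac{1}{8}+\tfrac{1}{16}=\tfrac{7}{16}<\tfrac{1}{2}$, so the displayed inequality with constant $1/(2m)$ is in fact false; no amount of term-pairing will rescue it.

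The paper's own argument is the pairing idea you describe, but carried out symmetrically: each negative even-index term $\alpha_{2j}c^{-(4j-2)}|\xi|^{4j}$ is split between \emph{both} of its odd-index neighbours via $ab\le\tfrac12(a^{2}+b^{2})$ together with the log-convexity bound $\alpha_{2j}^{2}\le\alpha_{2j-1}\alpha_{2j+1}$. After telescoping this leaves $\tfrac12\alpha_{1}|\xi|^{2}+\tfrac12\alpha_{2k-1}c^{-(4k-4)}|\xi|^{4k-2}=\tfrac14|\xi|^{2}+\textup{(positive)}$. The paper records $\tfrac12|\xi|^{2}$ at this point, which is an arithmetic slip; the method actually proves the slightly weaker bound $\ge |\xi|^{2}/(4m)$, and that is all the application needs (some $\gamma>0$ in \eqref{uniform lower bound}). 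Your one-sided pairing $\binom{1/2}{2\ell}y^{2\ell}+\binom{1/2}{2\ell+1}y^{2\ell+1}$ fails precisely because it leaves the leading $j=1$ term untouched and hence cannot compensate the first negative monomial near $y=0$; switching to the two-sided AM--GM splitting fixes this and yields a complete proof of the corrected statement.
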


\begin{proof}
By change of variables $\frac{\xi}{m}\mapsto\xi$, it suffices to prove the lemma assuming $m=1$. The inequality is trivial when $k=1$. Suppose that $k\geq 2$.  Splitting the positive and the negative terms and then applying the Cauchy-Schwarz inequality for the negative terms, we obtain
\begin{align*}
\sum_{j=1}^{2k-1}\frac{(-1)^{j-1}\alpha_j}{c^{2j-2}}|\xi|^{2j}&=\sum_{j=1}^k\frac{\alpha_{2j-1}}{c^{4j-4}}|\xi|^{4j-2}-\sum_{j=1}^{k-1}\frac{\alpha_{2j}}{c^{4j-2}}|\xi|^{4j}\\
&\geq \sum_{j=1}^k\frac{\alpha_{2j-1}}{c^{4j-4}}|\xi|^{4j-2}-\frac{1}{2}\sum_{j=1}^{k-1} \left\{\frac{(\alpha_{2j})^2}{\alpha_{2j-1}\alpha_{2j+1}}\frac{\alpha_{2j-1}}{c^{4j-4}}|\xi|^{4j-2}+\frac{\alpha_{2j+1}}{c^{4j}}|\xi|^{4j+2}\right\}.
\end{align*}
Since $\frac{(\alpha_{2j})^2}{\alpha_{2j-1}\alpha_{2j+1}}=\cdots=\frac{(4j-3)(2j+1)}{(4j-1)2j}\leq 1$ for all $j\geq 1$, it is bounded below from
\begin{align*}
&\sum_{j=1}^k\frac{\alpha_{2j-1}}{c^{4j-4}}|\xi|^{4j-2}-\frac{1}{2}\sum_{j=1}^{k-1} \left\{\frac{\alpha_{2j-1}}{c^{4j-4}}|\xi|^{4j-2}+\frac{\alpha_{2j+1}}{c^{4j}}|\xi|^{4j+2}\right\}\\
&= \sum_{j=1}^k\frac{\alpha_{2j-1}}{c^{4j-4}}|\xi|^{4j-2}-\frac{1}{2}\sum_{j=1}^{k-1}\frac{\alpha_{2j-1}}{c^{4j-4}}|\xi|^{4j-2}-\frac{1}{2}\sum_{j=2}^k\frac{\alpha_{2j-1}}{c^{4j-4}}|\xi|^{4j-2}\\
&=\frac{1}{2}|\xi|^2+\frac{\alpha_{2k-1}}{2c^{4k-4}}|\xi|^{4k-2}\geq\frac{1}{2}|\xi|^2.
\end{align*}
\end{proof}

\end{document}